\documentclass[12pt,reqno]{amsart}
\usepackage{graphicx}
\usepackage{amsmath, amsthm, amssymb, stmaryrd}
\usepackage{hyperref}
\usepackage{enumerate}
\usepackage{url}
\usepackage{scalerel}

\usepackage{mathtools}

\usepackage{multirow, array}
\usepackage{placeins}

\usepackage{caption} 
\advance \topmargin by -\headheight
\advance \topmargin by -\headsep
     
\evensidemargin \oddsidemargin
\newtheorem{thm}{Theorem}[section]
\newtheorem{lemma}[thm]{Lemma}

\newtheorem{cor}[thm]{Corollary}
\newtheorem{conj}[thm]{Conjecture}

\theoremstyle{definition}
\newtheorem{defn}[thm]{Definition}

\theoremstyle{remark}
\newtheorem{remark}[thm]{Remark}

\numberwithin{equation}{section}

\newcommand*\wrapletters[1]{\wr@pletters#1\@nil}
\def\wr@pletters#1#2\@nil{#1\allowbreak\if&#2&\else\wr@pletters#2\@nil\fi}

\def\eps{\varepsilon}

\def\le{\leqslant} \def\ge{\geqslant}

\def \bN {\mathbb N}

\def \bR {\mathbb R}
\def \bZ {\mathbb Z}

	\def \bF {\mathbf F}

\def \bk {\mathbf k}

\def \dag {\dagger}

\DeclareMathOperator*{\bigdelta}{\scalerel*{\mathrm{\Delta}}{\sum}}

\title[On the Extended 1-2-3 Conjecture of Pilz]{On the Extended 1-2-3 Conjecture of Pilz}

\author[Philippa Holdridge]{Philippa Holdridge}
\address{HUN-REN Alfr\'ed R\'enyi Institute of Mathematics, Re\'altanoda utca 13--15., H-1053 Budapest,  Hungary;   \newline \hspace*{4mm}
MTA--HUN-REN RI Lend\"ulet ``Momentum'' Arithmetic Combinatorics Research Group, Re\'altanoda utca 13--15., H-1053 Budapest,  Hungary}
\email{holdridge.philippa@renyi.hu}

\author{P\'eter P\'al Pach}

\address{HUN-REN Alfr\'ed R\'enyi Institute of Mathematics, Re\'altanoda utca 13--15., H-1053 Budapest,  Hungary;   \newline \hspace*{4mm}
MTA--HUN-REN RI Lend\"ulet ``Momentum'' Arithmetic Combinatorics Research Group, Re\'altanoda utca 13--15., H-1053 Budapest,  Hungary; \newline \hspace*{4mm} 
Department of Computer Science and Information Theory, Budapest University of Technology and Economics, M\H{u}egyetem rkp. 3., H-1111 Budapest, Hungary; \newline \hspace*{4mm}
Extremal Combinatorics and Probability Group (ECOPRO), Institute for Basic Science (IBS), Daejeon, South Korea.}
  
  \email{pachpp@renyi.hu}

\subjclass[2020]{}
\keywords{}
\thanks{}
\date{}

\begin{document}

\begin{abstract}
We resolve (for all sufficiently large $n$) a conjecture of Pilz on the symmetric difference  $A\Delta (2A)\Delta \cdots\Delta (nA)$ for finite sets $A\subseteq \mathbb{N}$ of positive integers. We show that this set always has cardinality at least $n$ for large $n$.
\end{abstract}

\maketitle

\setcounter{tocdepth}{1}
\tableofcontents

\section{Introduction}
We study a set operation on finite subsets of rings that arises naturally from parity type questions about the number of product representations and linear codes. Given a commutative ring $R$ and two finite sets $A,B\subseteq R$ and $x\in R$, we write $r_{A,B}(x)$ for the number of representations $x=ab$ with $a\in A$, $b\in B$. We define $A\ast B$ to be the set of those $x\in R$ such that $r_{A,B}(x)$ is odd.
We also use the notation $[n]$ to mean the set $\{1,2,\dots,n\}$. We write $a\cdot B=\{ab:b\in B\}$ and $\Delta$ for the symmetric difference of sets: $A\Delta B=(A\setminus B)\cup (B\setminus A)$.

The problem has its origins in coding theory. In \cite{p1992}, Pilz considered the minimal distance of a certain family of linear codes. The problem is equivalent to finding the minimum size of the set $A\ast [n]$ as $A$ ranges over all nonempty finite sets of natural numbers. Note that $A\ast [n]=A\Delta (2A)\Delta \cdots\Delta (nA)$. Pilz showed that when $n\le 6$, the set $A\ast [n]$ always has size at least $n$ \cite[Corollary 2]{p1992} and remarked that a computer search could not find any examples where $|A\ast [n]|<n$. In a later paper of Huang, Ke and Pilz \cite{hkp2010}, this was formally stated as a conjecture, which they called the Extended 1-2-3 Conjecture. We state this below and remark that the conjecture studied here is unrelated to the graph-theoretic 1-2-3 Conjecture.
\begin{conj}\label{pilz_conj}
For any $A\subseteq \bN$ finite and $n\in \bN$,
\[|A\:\Delta\: (2\cdot A)\:\Delta\cdots \Delta\: (n\cdot A)|\ge n.\]
\end{conj}
The special case $A=[k]$ (referred to as the 1-2-3 Conjecture) was resolved by Huang, Ke and Pilz \cite{hkp2010}, and independently by Szab\'o and the second author \cite{ps2011}. Our main result is a proof of Conjecture \ref{pilz_conj} for $n$ sufficiently large. 

\begin{thm}\label{pilz_large_n}
There is some effective constant $N$ such that for all $n\ge N$ and all $A\subseteq \bN$ finite and nonempty, we have $|A\:\Delta\: (2\cdot A)\:\Delta\cdots \Delta\: (n\cdot A)|\ge n$.

One may in particular take $N=2\cdot (3^{80}-321)$.
\end{thm}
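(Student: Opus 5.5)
The plan is to work in the monoid algebra. Since $(\bN,\cdot)$ is free commutative on the primes, identify a finite set $S\subseteq\bN$ with the polynomial $f_S=\sum_{s\in S}\prod_p x_p^{v_p(s)}\in\mathbf F_2[x_2,x_3,x_5,\dots]$. Then $A\ast[n]$ is exactly the support of $f_A f_{[n]}$, and we must show that this product has at least $n$ monomials. The basic tool is grading. For any weight on the variables, the top homogeneous component of $f_Af_{[n]}$ is the product of the top components of the factors, which is nonzero because we are in a domain. Our approach is to find many gradings whose top components have pairwise disjoint supports, and to bound each one by induction on $n$. We will carry the inductive hypothesis $|A\ast[m]|\ge m$ for $m<n$; small $m$ are covered by Pilz's $n\le 6$ result and, where necessary, by a weaker explicit bound.

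\emph{Step 1 (large primes).} Fix a prime $p\in(\sqrt n,n]$ and let $d_p=\max_{a\in A}v_p(a)$. The integers $\le n$ with $v_p=1$ are exactly $p\cdot m$ with $m\le\lfloor n/p\rfloor$, and no integer $\le n$ has $v_p\ge 2$. Hence the $x_p$-top component of the product is $x_p\,f_{A_p}f_{[\lfloor n/p\rfloor]}$, where $A_p=\{a\in A: v_p(a)=d_p\}$. Its monomials all have $v_p=d_p+1$, which exceeds every $p$-valuation occurring in $A$.

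Now take $q\ne p$ also in $(\sqrt n,n]$. Such a monomial has the form $a\,p\,m$ with $a\in A$ and $m<\sqrt n<q$, so its $q$-valuation is $v_q(a)\le d_q$. Therefore it cannot lie in the $q$-top set. The supports for different large primes are thus disjoint. By induction each has size at least $\lfloor n/p\rfloor$, which gives
\[|A\ast[n]|\ \ge\ \sum_{\sqrt n<p\le n}\lfloor n/p\rfloor \approx n\ln 2 .\]

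\emph{Step 2 (the remaining $(1-\ln 2)n$).} We still need the monomials not counted in Step 1. Every monomial of a large-prime top component has some large-prime valuation strictly greater than its maximum in $A$. So any monomial whose large-prime part occurs in $A$ (for example, in a bottom-degree or lexicographically extremal large-prime slice) is new. Restricting to a single large-prime pattern of $A$, for instance the lexicographically smallest one, reduces the problem to $A'\ast S$, where $A'$ lies in $\sqrt n$-smooth numbers and $S$ is the set of $\sqrt n$-smooth integers up to $n$. Iterating the same top-component argument with a threshold $y$ in place of $\sqrt n$ peels off primes in $(y,\sqrt n]$, though primes in this range can now occur squared, so overlaps must be handled. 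This pushes the problem down to very smooth numbers, and ultimately to $\{2,3\}$-smooth numbers. There one argues directly in $\mathbf F_2[x_2,x_3]$, using Newton-polygon and extremal-monomial arguments, to show that $f_{A''}$ times the staircase polynomial of $\{2^i3^j\le X\}$ has roughly $\log_2X\log_3X$ monomials or more. That is comfortably bigger than the deficit once $n$ is large.

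The explicit value $N=2(3^{80}-321)$ should come out of the quantitative bookkeeping in this two-variable step, comparing the prime-sum $\sum_p\lfloor n/p\rfloor$, computed with explicit prime-counting estimates, against $n$.

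\emph{Main obstacle.} The hard part is Step 2. The large-prime contribution alone gives only about $0.69n$, and the induction loses through floors and small-$m$ weakness. We therefore have to extract a linear-in-$n$ number of extra monomials that are genuinely disjoint from those of Step 1. The first thing to try is to take the complementary bottom component in \emph{all} large primes simultaneously. That yields $f_{A_{\min}}\cdot f_{S}$ with $S$ the set of $\sqrt n$-smooth integers up to $n$, and Dickman-type density says $|S|\approx\rho(2)n=(1-\ln 2)n$. This is exactly the missing amount, so the real difficulty is a near-lossless lower bound for products with the smooth-number polynomial.
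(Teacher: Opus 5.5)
Your Step 1 is correct as far as it goes. For $p\in(\sqrt n,n]$ the $x_p$-top components $p\cdot(A_p\ast[\lfloor n/p\rfloor])$ are pairwise disjoint and disjoint from the bottom component, so $|A\ast[n]|\ge\sum_{\sqrt n<p\le n}|A_p\ast[\lfloor n/p\rfloor]|+|A_{\min}\ast S|$. But this accounting has no slack whatsoever, because
\[
\sum_{\sqrt n<p\le n}\lfloor n/p\rfloor+|S|=n
\]
exactly: every $m\le n$ is either $\sqrt n$-smooth or divisible by exactly one prime $p>\sqrt n$, and only to the first power. So you need, with zero loss, both $|A_p\ast[m]|\ge m$ for every $m=\lfloor n/p\rfloor<\sqrt n$ and $|A_{\min}\ast S|\ge|S|$.

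The first requirement is not available. The conjecture is known only for $m\le 8$ (and, after the fact, for $m\ge N$). For $n$ near the threshold all these $m$ lie below $\sqrt n\ll N$, so your induction cannot be started. Substituting known weaker bounds such as $g^\dagger(m)$ or $\pi(m)+2$ throws away a constant fraction of the $n\ln 2$.

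The second requirement is a generalised Pilz inequality $g(S)\ge|S|$ for a non-interval set $S$, which the paper notes fails for general sets (e.g.\ $B=\{1,2,4\}$). Moreover, when $A_{\min}$ is a single element it gives exactly $|S|$, leaving nothing to absorb the Step 1 losses.

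Step 2 supplies no argument for either requirement. Iterated top-component extraction is essentially the mechanism of the Pach--Szab\'o recursion behind $g^\dagger$, which is only known to give $g(n)\gg n/(\log n)^{\lambda}$. The squared primes in $(y,\sqrt n]$ that you mention are precisely where uncontrolled overlaps and cancellation enter. Your two-variable endgame aims at about $\log_2X\log_3X=O((\log n)^2)$ monomials, which cannot cover a deficit of order $(1-\ln 2)n$. The constant $N=2(3^{80}-321)$ also does not come from any such bookkeeping.

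The missing idea is to avoid induction on $n$ altogether. The paper fixes $n$ and works only with primes $q\in(n/2,n]$. It deletes a set $\mathcal Q$ of at most $n^{4/5}$ of them from $[n]$ and re-inserts them one at a time, proving
\[
M(\mathcal Q\setminus\{q\})\ge\min\{M(\mathcal Q)+n^{1/5}+1,\ n+2-2|\mathcal Q|\},\qquad M(\mathcal Q)=\min_{A}|A\ast([n]\setminus\mathcal Q)|,
\]
so no smaller instance of the conjecture is ever used. Slicing by $\nu_q$ yields $M(\mathcal Q)$ plus the sizes of boundary sets $\mathcal A_i$. Either these already contribute $n^{1/5}+1$, or they are tiny. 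In the latter case $A_q^{(0)}\ast([n]\setminus\mathcal Q)$ is the symmetric difference of the sets $\mathcal A_i\ast([n]\setminus\mathcal Q)^{\ast i}$ with $\sum_i|\mathcal A_i|\le n^{1/5}+1$.

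This confines all cancellation to \emph{small} sets, where it is controlled structurally:
\begin{enumerate}
\item when some $\beta(i)\ge 2$, by counting products of primes in $(n/2,n]$;
\item when every $i$ is a power of $2$, by the identity $S\ast S=\{x^2:x\in S\}$ and the intersection bound;
\item for a single small set $\mathcal B$, by the uniform bound $h(\mathcal B)\ge 8/7$ unless $\mathcal B=\{a\}$ or $\{a,2a\}$, plus large-prime splitting and rough-number counts when $|\mathcal B|>80$.
\end{enumerate}
The bound $h(\mathcal B)\ge 8/7$ is combined with inclusion--exclusion, whose error term $(3^k-(-1)^k-4k)/4$ for $k\le 80$ is exactly what produces $N=2(3^{80}-321)$.
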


The constant $2\cdot (3^{80}-321)<3\cdot 10^{38}$ is certainly not the best that our method can achieve, but the main goal of this paper is to prove that such a reasonably sized constant exists, while keeping the presentation easier to follow and avoiding further technical calculations. Better explicit bounds for the counts of rough numbers in certain intervals would lead to immediate improvements of our constant. We believe that further progress toward Conjecture~\ref{pilz_conj} will require new ideas beyond the estimates used in this paper.

\subsection{Notation.} We first introduce some notation that will be used throughout the paper. The standard notation $\ll$ is applied to positive quantities in the usual way, that is, $X \ll Y$ means that $X \leq cY$,
for some absolute constant $c > 0$. 
If the constant $c$ depends on a quantity
$t$, we write $X \ll_t Y$. We use the convention $\mathbb{N}=\{1,2,\dots\}$.

Let $A^{\ast k}$ denote $\underbrace{A \ast A \ast \dots  \ast A}_{k \text{ times}}$, noting that this is well-defined because $\ast$ is associative, as we will prove in Section \ref{prelim_section}. We will also sometimes write, given some sets $A_{m},A_{m+1},\dots,A_{n}$,
\[\bigdelta_{i=m}^{n}A_{i}=A_{m}\Delta \cdots \Delta A_{n},\]
and if $A_{i}$ are instead indexed by some other finite set $I$, then we similarly define
$\bigdelta_{i\in I} A_{i}$ to be the symmetric difference of these $A_{i}$, which is well-defined and the order does not matter because $\Delta$ is both associative and commutative. Indeed, $\bigdelta_{i\in I} A_{i}$ contains those elements that are contained in an odd number of the sets $A_i$.

\section*{Further results}
In this section we state some secondary results towards the resolution of Conjecture \ref{pilz_conj}. In particular, we resolve the conjecture for $n\le 8$ (Theorem \ref{g_le8}) and in the case where $|A|$ is small compared to $n$ (Theorem \ref{kn/logk_bound}). We will also consider the asymptotic size of $A\ast [n]$ as $n\rightarrow \infty$ with $A$ fixed.

\begin{thm}\label{g_le8}
For all $n\le 8$, and $A\subseteq \bN$ finite and nonempty, we have $|A\ast [n]|\ge n$.
\end{thm}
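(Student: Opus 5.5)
The plan is to turn the problem into one about sparse polynomials over $\mathbf F_2$, and then finish with a structural reduction plus a finite (partly computer-assisted) check.

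\textbf{Reduction to smooth numbers.} Write every $a\in A$ uniquely as $a=m\,s$, where $s$ is $n$-smooth and $m$ has no prime factor $\le n$. Multiplying by $i\in[n]$ changes only the smooth part. Hence $A\ast[n]$ is the disjoint union over $m$ of $m\cdot(A_m\ast[n])$, where $A_m=\{s: ms\in A\}$. So it suffices to treat one nonempty $A_m$, and we may assume $A$ consists of $n$-smooth numbers. Let $p_1,\dots,p_k$ be the primes $\le n$, so $k=\pi(n)\le 4$. Encode $p_1^{e_1}\cdots p_k^{e_k}$ as the monomial $x_1^{e_1}\cdots x_k^{e_k}$. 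Then $A\mapsto f_A\in\mathbf F_2[x_1,\dots,x_k]$, $[n]\mapsto P_n=\sum_{i\le n}x^{v(i)}$, and $A\ast[n]$ corresponds exactly to the support of $f_A P_n$. The statement becomes: for every nonzero $g\in\mathbf F_2[x_1,\dots,x_k]$, the product $gP_n$ has at least $n$ monomials, for each $n\le 8$. This is also where associativity of $\ast$ from Section~\ref{prelim_section} gets used: it is simply associativity of polynomial multiplication.

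\textbf{Extremal monomials.} Next, get cheap lower bounds from vertices of Newton polytopes. For any generic linear functional $\lambda$, the $\lambda$-maximal monomial of $gP_n$ is the product of the $\lambda$-maximal monomials of $g$ and of $P_n$, so it survives. More usefully, restrict to a face of the Newton polytope of $P_n$ selected by a non-generic $\lambda$. Then the top $\lambda$-layer of $gP_n$ equals (top layer of $g$)$\cdot$(face polynomial of $P_n$), which is a nonzero product in fewer effective variables.

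Applying this to the top and bottom faces in several directions, for instance the $x_1$-degree (powers of $2$) and the $x_2$-degree, yields disjoint layers of $gP_n$. Each layer is bounded below inductively by the one- or two-variable case. In one variable, $g\cdot(1+x+\dots+x^{r})$ over $\mathbf F_2$ has at least $2$ terms, and the exact minimum is easy to analyse. Summing these layer bounds should already give $n$ for $n\le 5$ or $6$, recovering Pilz's result.

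\textbf{The hard cases $n=7,8$, and where the difficulty lies.} For $n=7,8$ the face bounds are probably a couple of terms short. I would first try to prove that a minimal counterexample $g$ has bounded support. If $gP_n$ has fewer than $n$ terms, then each slice of $g$ in the $x_3,x_4$ directions (primes $5,7$, which occur in $[n]$ only to exponent $\le 1$) must nearly cancel against its neighbours. This should force those slices to lie in a small explicit family, and hence force $g$ to have bounded degree in each variable.

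With such a degree bound in hand, the remaining statement is a finite computation: enumerate all $g$ with support in a fixed box and check that $|\operatorname{supp}(gP_n)|\ge n$. This can be done efficiently as linear algebra over $\mathbf F_2$, searching for low-weight codewords in the code $\{gP_n\}$, which is exactly Pilz's coding-theoretic formulation.

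The main obstacle is the degree-bounding step. Cancellations in $\mathbf F_2$ can propagate along long chains, for example multiples of $1+x_1$ telescoping. I would try to control this by showing that any chain of cancellations must end at an extremal layer, and each extremal layer contributes at least two terms by the one-variable bound.
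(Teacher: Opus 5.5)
Your reduction to $n$-smooth sets, the $\mathbf F_2$-polynomial encoding and the extremal-layer bounds are exactly the paper's tools (Lemmas \ref{rough_decomp}, \ref{ring_iso} and \ref{star_slice_formula}). However, the proposal stops where the real work begins, and the plan you give for $n=8$ cannot work as stated.

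You want to show that a minimal counterexample has bounded degree and then search a finite box. But cancellation chains in the $x_4$-direction (the prime $7$) are genuinely unbounded. Take $S_0=P_8+x_4$ (the set $\{1,2,3,4,5,6,8\}$) and $g_\nu=\sum_{i=0}^{\nu}x_4^{i}S_0^{\nu-i}$. Over $\mathbf F_2$ one has
\[g_\nu P_8=g_\nu\,(S_0+x_4)=S_0^{\nu+1}+x_4^{\nu+1},\]
so $|g_\nu P_8|$ is one more than the number of monomials of $S_0^{\nu+1}$, while $\deg_{x_4}g_\nu=\nu$. Your intuition that nearly cancelling slices fall into an explicit family is right. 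But the family is infinite, indexed by the chain length. Excluding counterexamples from it amounts to a lower bound on the number of terms of \emph{every} power of a fixed polynomial. No degree bound from local cancellation analysis, and no finite search, can supply that. Your fallback, ``each chain ends at an extremal layer worth at least two terms'', only yields a constant.

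The paper organises $n=8$ so that exactly one such power family survives, and handles it uniformly in the exponent:
\begin{enumerate}
\item Slicing by $3$ gives $|A\ast[8]|\ge 2+g(\{1,2,4,5,7,8\})$.
\item Slicing that by $7$ gives at least $1+g(\{1,2,4,5,8\})\ge 4$. There is an extra term unless $|A_7^{(\nu_7(A))}|=1$ and $7\cdot A_7^{(i-1)}=A_7^{(i)}\ast\{1,2,4,5,8\}$ for every $i$.
\item In that chain case, $A_7^{(0)}\ast\{1,2,4,5,8\}$ is a dilate of a power of $\{1,2,4,5,8\}\leftrightarrow 1+X+X^2+X^3+Y$. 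One then needs that $(1+X+X^2+X^3)^m$ has at least $4$ terms for all $m\ge 1$. The paper cites \cite{np2026}. Here it is also immediate: $1+X+X^2+X^3=(1+X)^3$ over $\mathbf F_2$, so $(1+X)^{3m}$ has $2^{\beta(3m)}\ge 4$ terms by Lucas' theorem.
\end{enumerate}
This yields $|A\ast[8]|\ge 7$.

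The second ingredient you are missing is Lemma \ref{parity_lemma}, $|A\ast B|\equiv|A|\,|B|\pmod 2$, which upgrades $7$ to $8$. Parity is needed already for small $n$. Your layer sums give only $3$ for $n=4$ and $5$ for $n=6$; once parity is added they reach $n$ for all $n\le 6$.

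Finally, $n=7$ is not a hard case. The top and bottom $x_4$-layers give $g(7)\ge g(6)+1$ (Lemma \ref{prime_g_bound}).
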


We can measure progress on Conjecture~\ref{pilz_conj} by defining
\[g(n)=\min_{A\subseteq \bN\:\text{finite, nonempty}}|A\ast [n]|\]
and finding a lower bound for $g(n)$. Pilz~\cite{p1992} proved the bound $\pi(n)+2\leq g(n)$ and the previous best known bound on $n$ is $g(n)\gg_{\lambda} n/(\log n)^{\lambda}$ for $\lambda>\lambda_{0}\approx 0.2223$, due to Szab\'o and the second author \cite[Theorem 2]{ps2011}. Note that the example $A=\{1\}$ means that $g(n)\le n$, so Pilz's conjecture is in fact equivalent to $g(n)= n$.

We can also define, for $B\subseteq \bN$ finite,
\[g(B)=\min_{A\subseteq \bN\:\text{finite, nonempty}}|A\ast B|.\]
It is not hard to see that the natural generalisation of Pilz's conjecture, that $g(B)=|B|$ for all finite $B\subseteq \bN$, does not hold. For example, if $B=\{1,2,4\}$ and $A=\{1,2\}$, then $A\ast B=\{1,8\}$. This means that a full proof of the conjecture must use some of the structure of the set $[n]$. A crucial property of $[n]$ that we will use is that there are many primes $p$ such that $p\in [n]$ and no larger multiple of $p$ is in $[n]$.

The main difficulty in establishing the conjecture is the large amount of cancellation that occurs. In many other problems in combinatorics, such as in Ramsey theory, we can pass from a large random set to a smaller set with more structure, but adding or removing even a single point from $A$ can dramatically affect the size of $A\ast [n]$. In fact Ke and Meyer proved in \cite[Theorem 5.1]{km2025} that for each $n$, there are arbitrarily large sets $A$ for which $|A\ast [n]|=n$. Let us show three examples where equality occurs: $A=\{a\}$, $A=\{a,2a\}$ and $n$ is even, and $A=[n]$, where $[n]*[n]=\{1^2,2^2,\dots,n^2\}$.  Following the proof of the aforementioned lower bound of  Szab\'o and the second author \cite{ps2011}, one finds that they actually obtain a lower bound for the number of integers which are represented {\it exactly once} as a product of $a\in A$ and $k\in [n]$. Such methods avoid having to deal with the cancellation, but they can never be sufficient to prove the full strength of Conjecture \ref{pilz_conj}. Note that if $B$ is a set of squarefree numbers, then it can be shown that there are at least $|B|$ products that have a unique representation as $ab$ ($a\in A,b\in B$). For $B=[n]$ this fails to hold, for instance, we have $[n]*[n]=\{1^2,2^2,\dots,n^2\}$, but most of the square numbers have multiple representations as $ab$ ($1\leq a,b\leq n$).

We recall an open problem from combinatorial geometry, first posed by Pak~\cite{Pak}, asking whether the area of the symmetric difference of an odd number of unit discs in $\mathbb{R}^2$ is always at least $\pi$. The problem remains open, see also \cite{Pin23} for further discussion. A key difference between this question and Conjecture~\ref{pilz_conj} is that the geometric problem is restricted to an odd number of sets, whereas no such parity restriction is present in our setting. We also note that the special case of Conjecture~\ref{pilz_conj} with $|A|$ odd would follow from the existence of an $n$-colouring of $\mathbb{N}$ such that $a,2a,\dots,na$ receive pairwise distinct colours for every $a\in\mathbb{N}$ \cite[Section 1.4]{CCP21}.

We can also consider what happens when we restrict the size of $A$. For $n,k\in \bN$, let
\[g(n,k)=\min_{\substack{A\subseteq \bN\\|A|=k}} |A\ast [n]|.\]
Ke and Meyer showed \cite{km2025} that $g(n,k)\ge n$ for $k\in \{1,2,3\}$ and all $n\in \bN$. Another main result of this paper will be to give a lower bound on $g(n,k)$ when $k$ is sufficiently small in terms of $n$.
\begin{thm}\label{kn/logk_bound}
Let $\eps>0$. Then for all $2\le k=|A|$ and $n\ge k^{2+\eps}$, we have
\[|A\ast [n]|\gg_{\eps} \frac{kn}{\log k}.\]
\end{thm}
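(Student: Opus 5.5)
The plan is to exhibit many elements of $A\ast[n]$ of the form $am$ with $a\in A$ and $m\le n$ a $y$\emph{-rough} number, i.e.\ all prime factors of $m$ exceed $y$. I would take $y=k^{1+\delta}$ for a small $\delta=\delta(\eps)>0$. The hypothesis $n\ge k^{2+\eps}$ guarantees $n\ge y^{2+\eps'}$ for some $\eps'>0$. In that range, standard sieve bounds (Buchstab or a Brun-type sieve) give
\[\#\{m\le n:\ m \text{ is } y\text{-rough}\}\gg_{\eps}\frac{n}{\log y}\gg_\eps \frac{n}{\log k}.\]
So the raw supply of pairs $(a,m)$ has size $\gg_\eps kn/\log k$. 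The remaining task is to show that a positive proportion of the resulting products have an odd number of representations as $a'b$ with $a'\in A$ and $b\in[n]$.

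First I would factor each integer as $x=x_sx_r$, with $x_s$ its $y$-smooth part and $x_r$ its $y$-rough part, and partition $A$ into classes $A_d=\{a\in A: a_s=d\}$. For $x=am$ with $m$ rough and any representation $x=a'b$, we get $a'_sb_s=d$, so $b_s=d/a'_s$. In particular, if no smooth part of another element of $A$ properly divides $d$, then every representation has $b$ rough and $a'\in A_d$. I would therefore first work with a class $A_d$ where $d$ is minimal under divisibility among the smooth parts occurring in $A$. Within such a class, consider a collision $am=a'm'$ with $a\neq a'$ and $m,m'$ rough. Writing $a_r/a'_r=r/s$ in lowest terms, we have $s\mid m$ and $r\mid m'$, with $r,s$ rough and $\max(r,s)>y$. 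Hence each ordered pair $(a,a')$ produces at most $n/y$ collisions. The number of products with at least two representations is then at most $|A_d|^2n/y$. The number of uniquely represented products, which lie in $A\ast[n]$, is therefore at least
\[|A_d|\Big(c\frac{n}{\log k}-\frac{|A_d|\,n}{k^{1+\delta}}\Big)\gg_\eps \frac{|A_d|\,n}{\log k}.\]

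To reach the full factor $k$ rather than $|A_d|$, I would peel the divisibility poset of smooth parts layer by layer. After handling the minimal classes, the next class $d$ admits representations $x=a'b$ with $a'_s$ a proper divisor of $d$ and $b_s=d/a'_s>1$. My idea is to attach each smooth part $e$ to the $y$-smooth factor of $b$, so that elements of $A\ast[n]$ with fixed smooth part $D$ correspond to
\[\bigdelta_{e\mid D}\Big(A_e\ast\{b\le n:\ b_s=D/e\}\Big).\]
I would then try an averaging or weighting argument over all $D$ with $D\le n^{\eps/10}$. The goal is to show that the total contribution across different $D$ is at least a constant times $\sum_d |A_d|\cdot n/\log k=kn/\log k$. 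The approach is to choose, for each class $d$, a set of multipliers $t$ (small smooth numbers) such that $dt$ is not reachable from any other class except through controlled collisions. The counting again uses the bound $n/y$ per pair of elements of $A$.

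The main obstacle is precisely this cancellation across different smooth classes. The argument for a single minimal class is clean, but non-minimal classes interact with minimal ones through smooth cofactors of $b$, and parity can be destroyed. As a fallback, I would give up uniqueness and look instead at the set of $D$ for which the layer above has odd total multiplicity. Here I would use the max-valuation trick: for a prime $p\le y$, the elements of maximal $p$-adic valuation $V_p(A)$ form a subset $A'$. Products with valuation $V_p(A)+v_p(b)$ at the largest achievable value then come only from $A'$. This lets me split $A$ recursively into pieces whose products do not interfere, while losing only a constant fraction of $k$ at each step. The delicate point is making sure that the total loss stays bounded.
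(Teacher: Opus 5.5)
There is a genuine gap. The argument you actually carry out only controls the minimal classes $A_d$, and nothing forces these to contain a positive proportion of $A$. Take $A=\{1,2,4,\dots,2^{k-1}\}$, or any $A$ whose $y$-smooth parts form a divisibility chain. Then there is a single minimal class, of size $1$, so what you have proved is only $|A\ast[n]|\gg n/\log k$. The trouble with the other classes is not technical. With multipliers ranging over all $y$-rough $m\le n$, if $a=2a'$ then $am=a'\cdot(2m)$ for every such $m\le n/2$, so these products cancel wholesale. The layer-by-layer peeling, the weighting over $D$, and the max-valuation recursion are not carried out. It is also unclear that the losses stay bounded: each max-valuation step certifies only the top slice, and the recursion may run over many primes $p\le y$.

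The missing idea is to take the rough multipliers only from $(n/2,n]$. Let $S$ be the set of $y$-rough $m\in(n/2,n]$; for $y\le (n/2)^{1/2}$ one still has $|S|\gg n/\log y$. Suppose $am=a'b$ with $a'\in A$, $b\le n$, $(a',b)\ne(a,m)$. Comparing rough parts gives $a_rm=a'_rb_r$. If $a_r=a'_r$ then $b_r=m$, and $b\le n<2m$ forces $b=m$ and $a'=a$; so $a_r\ne a'_r$. Put $\sigma=a'_r/(a_r,a'_r)$; then $\sigma\mid m$. The case $\sigma=1$ is impossible, since it would give $b_r=m\,a_r/a'_r>ym>n$. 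So $\sigma>y$, and each ordered pair $(a,a')$ spoils at most $n/(2y)+1$ values of $m$, whence
\[|A\ast[n]|\ge k|S|-k^{2}\Big(\frac{n}{2y}+1\Big).\]
Take $y=k^{1+\eps/4}$. For large $k$, the hypothesis $n\ge k^{2+\eps}$ gives $y\le (n/2)^{1/2}$, and the loss $k^{1-\eps/4}n/2+k^{2}$ is $o(kn/\log k)$. Bounded $k$ is handled by taking $y$ a large constant. So your own collision count handles all of $A$ at once, and no class decomposition is needed.

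The paper proceeds differently. It removes large primes first, via Corollary~\ref{large_prime_split} and induction on $k$, until every element of $A$ is $s$-smooth with $s\approx Hk^{2}$. Then every $s$-rough $x\in(n/2,n]$ is coprime to all of $A$, so each $a_ix$ is uniquely represented outright (Lemma~\ref{smooth_bound_A_star_n}), and no collision counting is required.
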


Henceforth, we will write $a_{1},\dots,a_{k}$ for the elements of a finite set $A\subseteq \bN$, where we may suppose without loss of generality that $a_{1}<\cdots<a_{k}$. We shall use the following inclusion-exclusion formula for the size of $A*[n]$:
\begin{lemma}\label{exact_folmula}
For any finite set $A\subseteq \bN$ and $n\ge 1$, we have
\begin{equation}\label{exact_formula_eq}|A\ast [n]|=\sum_{r=1}^{k}\sum_{1\le i_{1}<\cdots<i_{r}\le k} (-2)^{r-1}\left\lfloor\frac{a_{i_{1}}n}{[a_{i_{1}},\dots,a_{i_{r}}]} \right\rfloor.\end{equation}
\begin{proof}
This is exactly \cite[(2.1)]{km2025}.
\end{proof}
\end{lemma}
\begin{lemma}\label{hA_limit_identity}
For $A\subseteq \bN$ finite, the limit
\[\lim_{n\rightarrow\infty} \frac{|A\ast [n]|}{n}\]
exists, and is equal to
\begin{equation}\label{hA_exact_formula}h(A):=\sum_{r=1}^{k}\sum_{1\le i_{1}<\cdots<i_{r}\le k} (-2)^{r-1}\frac{a_{i_{1}}}{[a_{i_{1}},\dots,a_{i_{r}}]}.\end{equation}
\end{lemma}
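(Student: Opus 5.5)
The plan is to divide the exact formula of Lemma~\ref{exact_folmula} by $n$ and let $n\to\infty$. The set $A=\{a_1<\cdots<a_k\}$ is fixed, so the right-hand side of \eqref{exact_formula_eq} is a finite sum. It has $2^k-1$ terms, and this number does not depend on $n$. Each term has the form $(-2)^{r-1}\lfloor c_I n\rfloor$, where for each nonempty index set $I=\{i_1<\cdots<i_r\}$ we put
\[
c_I=\frac{a_{i_1}}{[a_{i_1},\dots,a_{i_r}]}.
\]
This $c_I$ is a positive rational number independent of $n$.

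Next we use the elementary bound $c_I n-1<\lfloor c_I n\rfloor\le c_I n$. It gives $\bigl|\lfloor c_I n\rfloor/n-c_I\bigr|<1/n$. Summing over all $I$ with the weights $(-2)^{r-1}$, we obtain
\[
\left|\frac{|A\ast[n]|}{n}-h(A)\right|\le \frac{1}{n}\sum_{r=1}^{k}\binom{k}{r}2^{r-1}=\frac{3^k-1}{2n}.
\]
The right-hand side tends to $0$ as $n\to\infty$. Hence the limit exists and equals $h(A)$ as defined in \eqref{hA_exact_formula}.

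No step here is a genuine obstacle, since all the work is done by Lemma~\ref{exact_folmula}. The only point to be careful about is that the number of terms stays bounded: it depends on $k$ but not on $n$. Because of this, the total error $O_k(1/n)$ vanishes in the limit. We also record the explicit error bound $(3^k-1)/(2n)$, since it may be useful later for quantitative comparisons between $|A\ast[n]|$ and $h(A)n$.
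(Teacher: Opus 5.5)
Your proposal is correct and follows the paper's route: the paper just says the lemma follows easily from Lemma~\ref{exact_folmula}, and you supply the details by dividing \eqref{exact_formula_eq} by $n$ and bounding each floor error by $1/n$. Your count of the $2^k-1$ terms and the resulting explicit error bound $(3^k-1)/(2n)$ are also correct.
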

\begin{proof}
This follows easily from Lemma \ref{exact_folmula}.
\end{proof}

Define
\[h(k)=\inf_{\substack{A\subseteq \bN\\ |A|=k}} h(A).\]
Theorem \ref{kn/logk_bound} implies a lower bound for $h(k)$.
\begin{cor}\label{hk_bound_k_over_logk}
For  $k\ne 1$, we have
\[h(k)\gg \frac{k}{\log k}.\]
\begin{proof}
This follows immediately from Theorem \ref{kn/logk_bound} and Lemma \ref{hA_limit_identity}.
\end{proof}
\end{cor}
We also have an explicit uniform lower bound for $h(A)$.
\begin{thm}\label{hA_uniform_lower_bound}
For any finite nonempty $A\subseteq \bN$, either
\[h(A)\ge \frac{8}{7}\]
or $A$ is of the form $\{a\}$ or $\{a,2a\}$ for some $a\in \bN$.
\end{thm}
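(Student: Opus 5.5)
The plan is to rewrite $h(A)$ as a weighted sum of densities. The key point is that divisibility by members of $A$ is governed by parity, and the dependence on $n$ comes only through the interval in which an integer lies. Put $a_0=0$, and for $1\le j\le k$ let $A_j=\{a_j,\dots,a_k\}$. For a finite $S\subseteq\bN$, let $\delta(S)$ be the natural density of the integers $x$ divisible by an odd number of elements of $S$. This is the density of $S\ast\bN$, and it is periodic modulo $\operatorname{lcm}(S)$.

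\textbf{Step 1: the decomposition.} An integer $x\in(a_{j-1}n,a_jn]$ has a representation $x=a_im$ with $m\le n$ only for $i\ge j$. Such a representation exists exactly when $a_i\mid x$. Hence $x\in A\ast[n]$ if and only if $x$ is divisible by an odd number of elements of $A_j$. Counting in each interval and letting $n\to\infty$, using Lemma \ref{hA_limit_identity} for existence of the limit, gives
\[h(A)=\sum_{j=1}^{k}(a_j-a_{j-1})\,\delta(A_j).\]
The last term alone is $(a_k-a_{k-1})/a_k$. Both $h$ and each $\delta(A_j)\cdot a_j$ are invariant under scaling $A$, so I would normalise $\gcd(A)=1$.

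\textbf{Step 2: density lower bounds.} I would try to prove a lemma of the form $\delta(S)\ge c(S)/\min S$. I expect $c(S)=1$ generically, and $c(S)=1/2$ only when the two smallest relevant elements are in ratio $2$. The lemma should be proved by induction on $|S|$, peeling off the largest element $s$. Adding $s$ flips the parity on multiples of $s$. Among multiples of $s$, the density with the new parity is at least $\frac{1}{s}\bigl(1-\delta'\bigr)$, where $\delta'$ is the density, relative to multiples of $s$, of the old odd set. Combining the two contributions, the old set off the multiples of $s$ and the flipped parity on them, should keep the density from dropping below the target.

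Inserting these bounds into the Step 1 formula gives $h(A)\ge \sum_j (a_j-a_{j-1})\,c(A_j)/a_j$. When no ratio-$2$ degeneracy occurs, this sum is at least $1+$ a positive amount. I would then bound that amount below by $1/7$ through a case analysis on $k$ and on the ratios $a_{j}/a_{j-1}$:
\begin{itemize}
\item If some gap is large, the telescoping terms $(a_j-a_{j-1})/a_j$ already sum to well above $8/7$.
\item If all ratios are close to $1$, then $k$ is large, and the many terms give a large sum.
\end{itemize}

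\textbf{Step 3: small and exceptional configurations.} The cases $k=1$ and $A=\{a,2a\}$ give $h=1$ and are excluded by the statement. For the remaining $k=2$ cases, and for near-degenerate $k=3$ sets such as $\{1,2,4\}$ or $\{1,2,3\}$, I would compute $h(A)$ exactly from \eqref{hA_exact_formula}. For $k=2$ this gives $h(\{a,b\})=1+\frac{b-a}{b}\cdot\bigl(\cdots\bigr)$ explicitly as a function of $a/b$ and $\gcd$. I expect the constant $8/7$ to be attained, or approached, by some specific small set. That would show the bound is sharp, and I would locate that set first to guide the case split.

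The main obstacle is Step 2. Cancellation can make $\delta(A_j)$ much smaller than $1/a_j$, for instance when $A_j$ contains chains like $a,2a,4a$. The inductive flipping argument must therefore track not just $\delta$ but the structure of the odd set on multiples of the largest element. If a clean uniform bound fails, I would fall back on using only the last two or three terms of the decomposition. Their ratios are then constrained enough that a finite case analysis, combined with the trivial bound $\delta\ge 1/\operatorname{lcm}$ on the remaining terms, yields $8/7$.
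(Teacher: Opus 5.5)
Your Step 1 identity $h(A)=\sum_{j}(a_j-a_{j-1})\,\delta(A_j)$ is correct, and it is a different starting point from the paper's. Step 2, however, does not work, and the failure is structural rather than technical.

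First, the proposed lemma, that $\delta(S)\ge c(S)/\min S$ with $c(S)=1$ outside ratio-$2$ situations, is false. Let $S$ be the set of divisors of $p_1\cdots p_t$. Then $x$ is divisible by exactly $2^{\omega(\gcd(x,p_1\cdots p_t))}$ elements of $S$, so $\delta(S)=\prod_i(1-1/p_i)$. This tends to $0$ while $\min S=1$. The deficit is also not tied to ratio $2$: $\delta(\{2,3,4,6\})=5/12<1/2$, although the two smallest elements have ratio $3/2$.

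More seriously, suppose you even had $\delta(A_j)\ge 1/a_j$ for every $j$. The resulting bound is $\sum_{j}(1-a_{j-1}/a_j)\le 1+\log(a_k/a_1)$, which is below $8/7$ whenever $a_k/a_1<e^{1/7}$, \emph{for every} $k$. So your claim that ratios close to $1$ force $k$ to be large, and the sum to be large, is false. For $A=\{100,101,102\}$ your bound gives about $1.02$, while $h(A)\approx 2.92$. Almost all of this comes from the single term $a_1\delta(A_1)$.

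Your fallback fails for the same reason. The tail terms carry weights $a_j-a_{j-1}$ that may be negligible compared with $a_1$. Nothing in them bounds $a_1$ or the prime factors of $A$, so no finite case analysis emerges.

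The missing idea is a lower bound showing that multiples of distinct elements of $A$ rarely coincide. This is what makes $a_1\delta(A)$ grow with $k$, and it is what produces finiteness. In the paper it is Lemma~\ref{partition_bound}: the loss from splitting $A=B\cup C$ is at most $2\sum_{b,c}(b,c)/\max\{b,c\}$. This is combined with three reductions:
\begin{enumerate}
\item Corollary~\ref{large_prime_split}, for when some element has a large prime factor;
\item Lemma~\ref{large_min_split} and Corollary~\ref{large_min_bound}, for when $\gcd(A)=1$ but $\min A$ is large;
\item Lemma~\ref{ratio_split}, for when some consecutive ratio is large.
\end{enumerate}
These leave finitely many smooth configurations, which are checked by computer for $k=3,4$ (Lemma~\ref{h4_size}). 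For $5\le k\le 32$ the paper iterates the numerical bound of Lemma~\ref{hk_algorithm2}, playing the large-prime split against the smooth-number bound of Corollary~\ref{prime_prod_with_g}. For $k\ge 33$ it uses the analytic argument of Lemma~\ref{constant_bound_large_k}.

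Note also that $8/7$ is not a sharp constant you can locate by searching small sets. Away from $\{a\}$ and $\{a,2a\}$, one has $h(A)\ge 4/3$ for $k=2$, and $h(3)=4/3$, $h(4)=5/3$. The value $8/7$ only reflects the numerical bound $h(5)\ge 1.1455\ldots$, and the authors expect $4/3$ to be the truth.
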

\begin{remark} We expect that the same theorem should hold with $8/7$ replaced with $4/3$. Our method could achieve this up to checking finitely many cases, but the number of cases to check is too large.
\end{remark}

\subsection{Organisation and proof strategy.}

In Section \ref{prelim_section}, we prove some basic facts about the binary operation $\ast$ that will be useful later.

Then in Section \ref{small_n_section}, we prove Conjecture \ref{pilz_conj} for $n\le 8$, using a method of dividing the set $A$ into slices according to the $p$-adic valuation for a certain prime $p$.

In Section \ref{hA_section}, we prove Theorem \ref{kn/logk_bound}. We prove some results that allow us to split up $A$ into smaller sets $B$ and $C$ such that $|A\ast [n]|$ is at least $|B\ast [n]|+|C\ast [n]|$ minus a small loss. We also show a lower bound for $|A\ast [n]|$ that applies when all of the elements of $A$ are sufficiently smooth and $n$ is sufficiently large in terms of $|A|$. Combining these results, we are able to prove Theorem \ref{kn/logk_bound}.

Then in Section \ref{hA_bound_section}, we prove some explicit lower bounds for $h(k)$, including Theorem \ref{hA_uniform_lower_bound}. We also compute the exact values of $h(3)$ and $h(4)$. These proofs involve using the results of the previous section to reduce the proof to checking finitely many cases, and then using a computer search.

Finally, in Section \ref{large_n_section}, we prove Theorem \ref{pilz_large_n}. The basic idea is the same $p$-adic slicing argument from Section \ref{small_n_section}, but is more involved. Most of the work goes into proving a lower bound for the size of sets of the form
\[\left(A_{1}\ast ([n]\setminus \mathcal{Q})\right)\Delta \cdots \Delta \left(A_{r}\ast ([n]\setminus \mathcal{Q})^{\ast r}\right),\]
where the $A_{i}$ are small in terms of $n$ and $\mathcal{Q}$ is a small set of primes. This requires results from Sections \ref{hA_section} and \ref{hA_bound_section}.

\section{The algebraic framework}\label{prelim_section}
In this section, we prove some basic results about the binary operation $\ast$ that will be useful later. Given a set $S$, we write $\mathcal{F}(S)$ to be the set of all finite subsets of $S$. Then $\ast$ is a binary operation on $\mathcal{F}(\bN)$ and it turns out that, together with the symmetric difference operator $\Delta$, it turns this set into a ring.
\begin{lemma}\label{ring}
The set $\mathcal{F}(\bN)$ with the binary operations $\Delta$ and $\ast$ form a commutative ring of characteristic $2$, with additive identity $\emptyset$ and multiplicative identity $\{1\}$. That is, $\ast$ and $\Delta$ are both commutative and associative and for every $A,B,C\subseteq \bN$ finite,
\[A\ast (B\Delta C)=(A\ast B)\Delta (A\ast C)\quad \text{(distributivity),}\]
\[A\Delta \emptyset=A\quad \text{(additive identity)},\]
\[A\ast \{1\}=A\quad \text{(multiplicative identity)},\]
and
\[A\Delta A=\emptyset \quad \text{(characteristic $2$)}.\]
\begin{proof}
These are all fairly immediate except for distributivity and the associativity of $\ast$. In both proofs, we will use the fact that for all $A,B\in \mathcal{F}(\bN)$ and $x\in \bN$
\begin{equation}\label{rAB_parity}r_{A,B}(x)\equiv 1_{A\ast B}(x) \: \text{(mod $2$)},\end{equation}
which follows from the definition of $\ast$.

Let $A,B,C\in \mathcal{F}(\bN)$. For any $x\in \bN$,
\[r_{A,B\Delta C}(x)=\sum_{ab=x} 1_{A}(a)1_{B\Delta C}(b).\]
For any $b\in \bN$, we have
\[1_{B\Delta C}(b)\equiv 1_{B}(b)+1_{C}(b)\: \text{(mod $2$)},\]
so
\[r_{A,B\Delta C}(x)\equiv r_{A,B}(x)+r_{A,C}(x)\: \text{(mod $2$)}.\]
It follows that $A\ast (B\Delta C)=(A\ast B)\Delta (A\ast C)$.
We also have
\[r_{A,B\ast C}(x)=\sum_{ab=x} 1_{A}(a)1_{B\ast C}(b)\equiv \sum_{ab=x} 1_{A}(a)r_{B,C}(b)\: \text{(mod $2$)}.\]
Then
\[\sum_{ab=x} 1_{A}(a)r_{B,C}(b)=\sum_{abc=x} 1_{A}(a)1_{B}(b)1_{C}(c).\]
By a similar argument, we also have
\[r_{A\ast B, C}(x)\equiv \sum_{abc=x} 1_{A}(a)1_{B}(b)1_{C}(c)\: \text{(mod $2$)},\]
which implies that $A\ast (B\ast C)=(A\ast B)\ast C$.
\end{proof}
\end{lemma}

\begin{lemma}\label{ring_iso}
Let $p_{i}$ be the $i$th prime number and $\bF_{2}$ the finite field of order $2$. The ring homomorphism
\[\varphi:\bF_{2}[X_{1},X_{2},\dots]\rightarrow \mathcal{F}(\bN)\]
given by
\[\varphi(X_{i})=\{p_{i}\}\]
is an isomorphism, and for every $f\in \bF_{2}[X_{1},X_{2},\dots]$, the cardinality of $\varphi(f)$ is equal to the number of nonzero coefficients of $f$.
\begin{proof}
Given $\bk=(k_{1},k_{2},\dots)$ with $k_{i}\in \bZ_{\ge 0}$ for all $i$ and $k_{i}=0$ for all $i>n$, say, we write $X^{\bk}$ to denote the monomial $X_{1}^{k_{1}}\cdots X_{n}^{k_{n}}$ and we write $P^{\bk}=p_{1}^{k_{1}}\cdots p_{n}^{k_{n}}$. We have that
\[\varphi\left(X^{\bk}\right)=\{P^{\bk}\}.\]
Any $f\in \bF_{2}[X_{1},X_{2},\dots]$ can be written as
\[f=\sum_{i=1}^{m} X^{\bk_{i}}\]
for some distinct $\bk_{1},\dots,\bk_{m}$. Then we can calculate
\[\varphi(f)=\{P^{\bk_{1}},\dots,P^{\bk_{m}}\}.\]
By uniqueness of prime factorisation, $\varphi$ is injective and $\varphi(f)$ has cardinality $m$. It is also surjective since every $x\in \bN$ can be written as $P^{\bk}$ for some $\bk$.
\end{proof}
\end{lemma}
In light of this isomorphism, for a polynomial $f\in \bF_{2}[X_{1},X_{2},\dots]$, we define $g(f)$ to be the minimum number of nonzero coefficients of $f\cdot h$ over all $h\in \bF_{2}[X_{1},X_{2},\dots]\setminus \{0\}$. If $\varphi$ is the isomorphism from Lemma \ref{ring_iso} then $g(f)=g(\varphi(f))$ for all $f$.
\begin{remark}\label{variable_permute}We note the following fact which will be useful later. If $\alpha$ is the automorphism of $\bF_{2}[X_{1},X_{2},\dots]$ which maps $X_{i}$ to $X_{\sigma(i)}$ for some bijection $\sigma:\bN\rightarrow \bN$, then $g(f)=g(\alpha(f))$ for all $f$.\end{remark}

Given $s\in \bN$, we say that an integer $n$ is $s$-smooth if for any prime $p$, if $p\mid n$ then $p\le s$. We say that it is $s$-rough if for every prime $p$, if $p\mid n$ then $p\ge s+1$. Every $n\in \bN$ decomposes uniquely as a product $n=ab$ where $a$ is $s$-smooth and $b$ is $s$-rough. Note that $1$ is $s$-smooth and $s$-rough for any $s$. 

\begin{lemma}\label{rough_decomp}
Let $A\subseteq \bN$ be finite and $n\in\bN$. Then there are unique $1\le b_{1}<\cdots<b_{r}$ which are $n$-rough, and sets $C_{i}$ for $1\le i\le r$ such that every $c\in C_{i}$ is $n$-smooth and $A$ is the disjoint union
\[A=\bigcup_{i=1}^{r} b_{i}\cdot C_{i}.\]
Furthermore, we have
\begin{equation}\label{rough_decomp_star_n}A\ast [n]=\bigcup_{i=1}^{r} b_{i}\cdot (C_{i}\ast [n]),\end{equation}
and
\begin{equation}\label{rough_decomp_star_size}|A\ast [n]|=\sum_{i=1}^{r} |C_{i}\ast [n]|.\end{equation}
\begin{proof}
For each $a\in A$, let $a=s(a)r(a)$ where $s(a)$ is $n$-smooth and $r(a)$ is $n$-rough. These $s(a)$, $r(a)$ are uniquely determined by $a$. Let $b_{1}<b_{2}<\cdots<b_{r}$ be the distinct values that are taken by $r(a)$ for $a\in A$ and let $C_{i}=\{s(a):a\in A,\: r(a)=b_{i}\}$. Then it is easily checked that the sets $b_{i}\cdot C_{i}$ are pairwise disjoint and their union is $A$.

If $a,a'\in A$ and $m,m'\in [n]$ are such that $am=a'm'$ then we claim that $a,a'\in b_{i}\cdot C_{i}$ for some $i$. From this, \eqref{rough_decomp_star_n} and \eqref{rough_decomp_star_size} will follow. To prove the claim, we note that $am=s(a)mr(a)=s(a')m'r(a')=a'm'$. Every $m,m'\in [n]$ is $n$-smooth, so by uniqueness of the smooth-rough decomposition, we must have $s(a)m=s(a')m'$ and $r(a)=r(a')=b_{i}$ for some $i$. Then by definition of $C_{i}$, we find that $s(a),s(a')\in C_{i}$.
\end{proof}
\end{lemma}

\section{Verifying the conjecture for $n\le 8$}\label{small_n_section}
Our aim in this section is to prove Theorem \ref{g_le8}, that is, that $g(n)=n$ for all $n\le 8$. By Lemma \ref{rough_decomp}, to check that $g(n)\ge n$, it suffices to check those $A$ with every $a\in A$ $n$-smooth. We will therefore assume without loss of generality in the proofs in this section that $A$ consists only of $n$-smooth numbers. The cases $n\le 6$ are already known from the work of Pilz \cite{p1992}, so we just need to prove that $g(7)=7$ and $g(8)=8$.

For a prime $p$, we write $\nu_{p}$ for the $p$-adic valuation. We also write
\[\nu_{p}(A)=\max\{\nu_{p}(a): a\in A\},\]
and in this section, for $i\in \bZ$,
\[A_{p}^{(i)}=\{a\in A:\nu_{p}(a)=i\}.\]
Note that for any fixed $p$, $A$ is the disjoint union $\bigcup_{i\in \bZ}A_{p}^{(i)}$.
\begin{lemma}\label{star_slice_formula}
Let $S\subseteq \bN$ and $p$ a prime such that $\nu_{p}(x)\le 1$ for all $x\in S$. Let $S_{0}=\{x\in S:p\nmid x\}$, $S_{1}=\{x\in S:p\mid x\}$. Then
\[|A\ast S|=\sum_{i=0}^{\nu_{p}(A)+1}\left|\left(A_{p}^{(i-1)}\ast S_{1}\right)\Delta \left(A_{p}^{(i)}\ast S_{0}\right)\right|.\]
\begin{proof}
Let
\[B_{p}^{(i)}=\{b\in A\ast S:\nu_{p}(b)=i\}.\]
Then it is not hard to show that
\[B_{p}^{(i)}=\left( A_{p}^{(i-1)}\ast S_{1}\right)\Delta \left(A_{p}^{(i)}\ast S_{0}\right).\]
The result follows.
\end{proof}
\end{lemma}
\begin{lemma}\label{prime_g_bound}
Let $p$ be a prime. Then $g(p)\ge g(p-1)+1$.
\begin{proof}
Without loss of generality, we may assume that $A_p^{(0)}\ne\emptyset$. 
The claimed bound immediately follows from Lemma \ref{star_slice_formula} with $S=[p]$ upon noting that
\[\left|A_{p}^{(\nu_{p}(A))}\ast \{p\}\right|=\left| A_{p}^{(\nu_{p}(A))}\right|\ge 1\]
and
\[\left|A_{p}^{(0)}\ast [p-1]\right|\ge g(p-1).\]
\end{proof}
\end{lemma}

The following Lemma is \cite[Corollary 2]{p1992}.
\begin{lemma}[Pilz]\label{g_le6}
For $n\le 6$, $g(n)=n$.
\end{lemma}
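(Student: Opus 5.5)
The statement is Pilz's result, $g(n)=n$ for $n\le 6$. The plan is to reprove it by induction on $n$, using Lemma \ref{prime_g_bound} for prime $n$ and a separate argument for the composite values $n=4,6$. The upper bound $g(n)\le n$ is immediate from $A=\{1\}$, so only the lower bound needs work.

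For the base case, $g(1)=1$ holds trivially since $A\ast[1]=A\neq\emptyset$. The prime steps are then handled by Lemma \ref{prime_g_bound}: it gives $g(2)\ge g(1)+1=2$, $g(3)\ge g(2)+1=3$, and, once $g(4)=4$ and $g(6)=6$ are known, $g(5)\ge 5$ and $g(7)\ge 7$ as well.

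For the composite cases I would slice with respect to $p=2$, but $[n]$ contains $4$, so Lemma \ref{star_slice_formula} does not apply directly. Instead I would work in the ring of Lemma \ref{ring} and write, for $n=4$,
\[
[4]=\{1,3\}\,\Delta\,\bigl(\{2\}\ast\{1,3\}\bigr)\,\Delta\,\{4\}=\{1,2,3,4\}.
\]
In polynomial language, with $x=X_1$ for the prime $2$ and $y=X_2$ for $3$, this is $[4]=1+x+y+x^2$. Under the isomorphism of Lemma \ref{ring_iso}, $g(4)\ge 4$ becomes the claim that for every nonzero $f$ the product $f(1+x+y+x^2)$ has at least $4$ monomials.

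Expand $f=\sum_i f_i x^i$ with the $f_i$ free of $x$. The coefficient of $x^j$ in the product is $f_j(1+y)+f_{j-1}+f_{j-2}$. Let $i_0$ and $i_1$ be the smallest and largest indices with $f_i\ne0$. The slice $j=i_0$ equals $f_{i_0}(1+y)$, which has at least $g(\{1,3\})=2$ terms, since multiplication by $1+y$ never shrinks a polynomial to one term. The top slice $j=i_1+2$ equals $f_{i_1}$ and has at least $1$ term, as does the slice $j=i_1+1$, which equals $f_{i_1}+f_{i_1-1}$.

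The main obstacle is that these three slices may not be distinct, and $f_{i_1}+f_{i_1-1}$ could vanish. I would resolve this by a short case analysis on $i_1-i_0$. Where slices coincide, I would use the fact that the middle slices $f_j(1+y)+f_{j-1}+f_{j-2}$ cannot all vanish: if they did, then $f$ would be determined recursively and would fail to terminate, because $1+y$ is not a unit. For $n=6$ I would run the same scheme with $[6]$, slicing at the prime $3$ so that $S_1=\{3,6\}$ and $S_0=\{1,2,4,5\}$, and Lemma \ref{star_slice_formula} then applies since $9\notin[6]$. The lowest slice then contributes at least $g(\{1,2,4,5\})$, and the case $A=\{a,2a\}$ has to be handled separately, as the equality examples suggest. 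If this becomes messy, the fallback is simply to cite \cite[Corollary 2]{p1992}, which is what the statement attributes the result to.
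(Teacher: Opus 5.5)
The paper gives no argument for this statement: it is quoted from Pilz \cite[Corollary 2]{p1992}. Your fallback is therefore exactly the paper's ``proof'', and your reproof is a genuinely different, self-contained route built from the paper's own tools.

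Its skeleton is sound. $g(1)=1$ is trivial. Lemma~\ref{prime_g_bound}, which does not depend on the present statement, gives $g(2)$, $g(3)$, and $g(5)$ from $g(4)$. Your $n=4$ argument is also essentially right.

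There is a slip in your display: $\{2\}\ast\{1,3\}=\{2,6\}$, so your $\Delta$-decomposition produces $\{1,2,3,4,6\}$. The polynomial $1+x+y+x^2$ you actually use is correct, though. The slices $i_0$, $i_1+1$, $i_1+2$ are always distinct, so the only issue is whether slices $i_0+1,\dots,i_1+1$ can all vanish. Here ``the recursion fails to terminate'' must be replaced by an actual argument. For instance, if they all vanish, reduce modulo $1+y$ (set $y=1$) to get $\bar f\,(x+x^2)=x^{i_1+2}\bar f_{i_1}$. Comparing coefficients of $x^{j}$ for $j\le i_1+1$ gives successively $\bar f_{i_0}=\bar f_{i_0+1}=\cdots=\bar f_{i_1}=0$, so $(1+y)\mid f$. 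Then $f/(1+y)$ satisfies the same hypotheses with smaller $y$-degree, which is impossible by descent.

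The genuine gap is $n=6$. You stop at ``the lowest slice contributes at least $g(\{1,2,4,5\})$'', but you bound neither that quantity nor the top slice, and these two bounds are the entire content of the case. Both are available:
\begin{itemize}
\item The top slice is $A_3^{(\nu_3(A))}\ast\{3,6\}=3\cdot\bigl(A_3^{(\nu_3(A))}\ast\{1,2\}\bigr)$, of size at least $g(2)=2$.
\item $\{1,2,4,5\}$ corresponds to $1+X_1+X_1^2+X_3$, which is the image of $1+X_1+X_1^2+X_2$ (that is, of $[4]$) under $X_2\leftrightarrow X_3$. Remark~\ref{variable_permute} then gives $g(\{1,2,4,5\})=g(4)\ge 4$; your $n=4$ argument applies verbatim, since the $f_i$ may involve all other variables.
\end{itemize}
Hence $|A\ast[6]|\ge 2+4=6$. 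The special treatment of $\{a,2a\}$ you anticipate is unnecessary, since equality cases do not obstruct a lower bound.

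Alternatively, Lemma~\ref{parity_lemma} turns both composite cases into one-liners, in the style of the paper's proof of $g(8)=8$. Slicing $[4]$ at $3$ gives $|A\ast[4]|\ge 1+g(\{1,2,4\})\ge 3$, because $|A\ast B|\ge 2$ whenever $|B|\ge 2$ (look at $\min A\cdot\min B$ and $\max A\cdot\max B$); evenness then gives $4$. Similarly, slicing $\{1,2,4,5\}$ at $5$ gives $g(\{1,2,4,5\})\ge 1+g(\{1,2,4\})\ge 3$, hence $\ge 4$ by parity.
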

The following is a generalisation of \cite[Lemma 4]{p1992}, which states that when $n$ is even, then so is $|A\ast [n]|$ for any finite $A\subseteq\bN$.
\begin{lemma}\label{parity_lemma}
If $A,B\subseteq \bN$ are finite and nonempty, then
\[|A\ast B|\equiv |A|\cdot |B|\: \text{(mod $2$)}\]
and
\[|A\ast B|\ge 1.\]
\begin{proof}
Recall the identity \eqref{rAB_parity}
\[r_{A,B}(x)\equiv 1_{A\ast B}(x)\: \text{(mod $2$)},\]
for all $x\in \bN$. Then the first part follows from the facts that $|A\ast B|=\sum_{x\in \bN} 1_{A\ast B}(x)$ and $|A|\cdot |B|=\sum_{x\in \bN} r_{A,B}(x)$.

For the second part, note that if $x=\max A \cdot \max B$, then $r_{A,B}(x)=1$, so $x\in A\ast B$.
\end{proof}
\end{lemma}

\begin{lemma}\label{g8}
We have $g(8)=8$.
\begin{proof}
Applying Lemma \ref{star_slice_formula} with $p=3$, we have
\begin{equation}\label{g8_slice}|A\ast [8]|\ge \left|A_{3}^{(\nu_{3}(A))}\ast \{3,6\}\right|+\left|A_{3}^{(0)}\ast \{1,2,4,5,7,8\}\right|\ge 2+g(\{1,2,4,5,7,8\}).\end{equation}
By Lemma \ref{parity_lemma}, $g(\{1,2,4,5,7,8\})$ is even, so it suffices to show that
\[g(\{1,2,4,5,7,8\})\ge 5.\]
By Lemma \ref{star_slice_formula} with $p=7$,
\[|A\ast \{1,2,4,5,7,8\}|\ge \left|A_{7}^{(\nu_{7}(A))}\right|+\left|A_{7}^{(0)}\ast \{1,2,4,5,8\}\right|\]
with equality if and only if, for all $1\le i\le \nu_{7}(A)$,
\[7\cdot A_{7}^{(i-1)}=A_{7}^{(i)}\ast \{1,2,4,5,8\}.\]
Also, by Lemmas \ref{star_slice_formula} and \ref{parity_lemma}
\[g(\{1,2,4,5,8\})\ge g(\{1,2,4,8\})+1\ge 3.\]
Hence, either
\begin{equation}\label{g8_eq1}\left|A\ast \{1,2,4,5,7,8\}\right|\ge g(\{1,2,4,5,8\})+2\ge 5\end{equation}
or $|A_{7}^{(\nu_{7}(A))}|=1$ and
\begin{equation}\label{g8_eq2}\left|A_{7}^{(0)}\ast \{1,2,4,5,8\}\right|=\left|\{1,2,4,5,8\}^{\ast \nu_{7}(A)}\right|.\end{equation}
We claim that $\{1,2,4,5,8\}^{\ast m}$ has cardinality at least $5$ for all $m\ge 1$. By Lemma \ref{ring_iso}, this is equivalent to showing that $(1+X+X^{2}+X^{3}+Y)^{m}$ has at least $5$ nonzero coefficients in $\bF_{2}[X,Y]$. It suffices to show that $(1+X+X^{2}+X^{3})^{m}$ has at least $4$ nonzero coefficients. This follows from \cite[Theorem 1.3]{np2026}. Translated into polynomial language, this theorem says that for any $r\in \bN$ and $a_{1},\dots,a_{k}\in \bN$ (not necessarily distinct), the polynomial $\prod_{i=1}^{k} (1+X^{a_{i}}+\cdots+X^{(r-1)a_{i}})$ has at least $r$ nonzero coefficients.

It follows that
\[\left|\{1,2,4,5,8\}^{\ast \nu_{7}(A)}\right|\ge 5,\]
and then, by \eqref{g8_slice}
\[|A\ast [8]|\ge 7.\]
But by Lemma \ref{parity_lemma}, $|A\ast [8]|$ is even, so the result follows.
\end{proof}
\end{lemma}

\begin{proof}[Proof of Theorem \ref{g_le8}]
We combine Lemmas \ref{g_le6} and \ref{g8}, and use Lemma \ref{prime_g_bound} to handle the case $n=7$.
\end{proof}

\section{The case of small $A$}\label{hA_section}
The aim of this section is to prove Theorem \ref{kn/logk_bound}. Our argument will essentially be by induction on the size of $A$, and the following lemma will be instrumental in the inductive step.
\begin{lemma}\label{partition_bound}
Suppose that $A=B\cup C$ for disjoint $B,C$ and let $n\in \bN$. Then for all $S\subseteq [n]$,
\[|B\ast S|+|C\ast S|-2\sum_{b\in B}\sum_{c\in C}\left\lfloor\frac{(b,c)n}{\max\{b,c\}}\right\rfloor\le |A\ast S|\le |B\ast S|+|C\ast S|,\]
so in particular,
\[h(B)+h(C)-2\sum_{b\in B}\sum_{c\in C}\frac{(b,c)}{\max\{b,c\}}\le h(A)\le h(B)+h(C).\]
\begin{remark}
The assumption that $B$, $C$ are disjoint can be removed with some extra effort, but we will not need this.
\end{remark}
\begin{proof}
By the disjointness of $B,C$ we have
\[A=B\cup C=B\Delta C,\]
so by distributivity (Lemma \ref{ring}),
\[A\ast S=(B\ast S)\Delta (C\ast S)\]
and
\[|B\ast S|+|C\ast S|\ge |A\ast S|\ge |B\ast S|+|C\ast S|-2\left|(B\ast S)\cap (C\ast S)\right|.\]
It suffices to show that
\[\left|(B\ast S)\cap (C\ast S)\right|\le \sum_{b\in B}\sum_{c\in C}\left\lfloor\frac{(b,c)n}{\max\{b,c\}}\right\rfloor.\]
Since $S\subseteq [n]$, we clearly have,
\[B\ast S\subseteq \bigcup_{b\in B}b\cdot [n],\]
and similarly for $C$. It follows that
\[\left|(B\ast S)\cap (C\ast S)\right|\le \sum_{b\in B}\sum_{c\in C}\left|(b\cdot [n])\cap (c\cdot [n])\right|.\]
We can then check that
\[\left|(b\cdot [n])\cap (c\cdot [n])\right|=\left\lfloor\frac{(b,c)n}{\max\{b,c\}}\right\rfloor.\]
For the second part, we take $S=[n]$ and recall the definition of $h$.
\end{proof}
\end{lemma}

In the proof of Theorem \ref{kn/logk_bound}, we will use Lemma \ref{partition_bound} in the form of the following corollary.
\begin{cor}\label{large_prime_split}
Suppose that $|A|=k\ge 2$, $\gcd(A)=1$ and suppose that $p$ is a prime dividing $a$ for some $a\in A$. Then there are $B$, $C$ nonempty and disjoint sets such that $A=B\cup C$, $m=|B|$, and for all $n\in \bN$ and $S\subseteq [n]$,
\[|A\ast S|\ge |B\ast S|+|C\ast S|-n\cdot \frac{2m(k-m)}{p},\]
so that
\[h(A)\ge h(B)+h(C)-\frac{2m(k-m)}{p}.\]
\begin{proof}
Let
\[B=\{a\in A: p\mid a\}\]
and $C=A\setminus B$. Then $B$ is nonempty by assumption and $C$ is nonempty because otherwise $p\mid \gcd(A)$. For every $b\in B$ and $c\in C$, we have $p\mid b$ but $p\nmid c$, so
\[\frac{(b,c)}{b}\le \frac{1}{p}.\]
The result then follows immediately from Lemma \ref{partition_bound}.
\end{proof}
\end{cor}

Corollary \ref{large_prime_split} allows us to deal with those $A$ where a large prime divides some $a\in A$. The remaining $A$ are those with all $a\in A$ smooth. We deal with these in the following lemma.
\begin{lemma}\label{smooth_bound_A_star_n}
Suppose that $A\subseteq \bN$ with $|A|=k$ is such that every $a\in A$ is $s$-smooth. If $2\le s\le n/2$, then we have
\begin{equation}\label{A_star_n_smooth_bound}\left|A\ast [n]\right|\gg \frac{kn}{\log s}.\end{equation}
Furthermore, if $n\ge 41$, $7\le s\le n/2$ and $T\subseteq [n]$ with $\alpha=|T|/n$ then we have the explicit bound
\begin{equation}\label{A_star_n_smooth_bound_explicit}\left|A\ast ([n]\setminus T)\right|\ge kn\left(\frac{1}{10\log s}-\alpha\right).\end{equation}
\begin{proof}
Let $T\subseteq [n]$ and $2\le s\le n/2$ and let $S(n)$ be the set of all $s$-rough numbers in the range $(n/2,n]$. These numbers are all coprime to $a_{1}\cdots a_{k}$.

Suppose $x\in S(n)\setminus T$ and $1\le i\le k$. We claim that $r_{A,[n]\setminus T}(a_{i}x)=1$ and hence $a_{i}x\in A\ast ([n]\setminus T)$. To see this, suppose that $y\in [n]$ and $1\le j\le k$ with $a_{i}x=a_{j}y$. Then $x\mid a_{j}y$, and since $x$ and $a_{j}$ are coprime, we must have $x\mid y$. But since $n/2<x\le y\le n$, this means that $y=x$ and hence also $a_{i}=a_{j}$. It follows that
\[\left|A\ast ([n]\setminus T)\right|\ge k|S(n)\setminus T|.\]
We first suppose that $n\ge 41$ and $s\ge 7$ and show \eqref{A_star_n_smooth_bound_explicit}. For $x,y\ge 1$, we define $\Phi(x,y)$ to be the number of $m\le x$ which are $y$-rough. Note that $|S(n)|=\Phi(n,s)-\Phi(n/2,s)$, so for $x\ge 10$ and $2\le y\le x^{1/2}$, \cite[Theorem 1]{fp2024} gives the upper bound
\[\Phi(x,y)<\frac{0.6 x}{\log y}\]
and \cite[Theorem 2.3]{f2024} tells us that for all $7\le y\le x^{2/3}$, we have
\[\Phi(x,y)>\frac{0.4x}{\log y}.\]
So for all $n\ge 10$ and $7\le s\le (n/2)^{1/2}$, we have
\[|S(n)|\ge \frac{0.4 n}{\log s}-\frac{0.3 n}{\log s},\]
which implies \eqref{A_star_n_smooth_bound_explicit} when $s\le (n/2)^{1/2}$ (note that $n\ge 10$ is satisfied automatically because otherwise $n/2<7$). When $(n/2)^{1/2}<s\le n/2$, we have $\log s>(\log n)/2$. Also, $S(n)$ contains all of the primes in the range $(n/2,n]$, so $|S(n)|\ge \pi(n)-\pi(n/2)$, which by \cite[Corollary 3]{rs1962} is at least $3 n/(10 \log n)\ge 3 n/(20 \log s)$ when $n\ge 41$, which is sufficient to show \eqref{A_star_n_smooth_bound_explicit}.

We then have \eqref{A_star_n_smooth_bound} in the case where $n\ge 41$ and $s\ge 7$. In the remaining cases, we have that
\[\Phi(x,6)=x\prod_{p\le 6}\left(1-\frac{1}{p}\right)+O(1),\]
which deals with the case $s<7$ and $n$ sufficiently large. For $n\ll 1$, by Bertrand's postulate, $|S(n)|\ge 1$, so $|A\ast [n]|\ge 1$. Then \eqref{A_star_n_smooth_bound} follows in general.
\end{proof}
\end{lemma}

\begin{proof}[Proof of Theorem \ref{kn/logk_bound}]
Let $\eps>0$, $k\ge 2$, $H\ge 1/2$, $n\ge H^{1+\eps} k^{2+\eps}$, and suppose that for all $1\le i\le k-1$, and all $A\subseteq \bN$ with $|A|=i$, we have
\[|A\ast [n]|\ge \frac{in}{H\log (i+1)}.\]
Suppose that $A=\{a_{1},\dots,a_{k}\}$ for some positive integers $a_{1}<\cdots<a_{k}$ and suppose without loss of generality that $\gcd(A)=1$. For each $B,C\subseteq A$ disjoint and nonempty sets with $A=B\cup C$, we have
\[|B\ast [n]|+|C\ast [n]|\ge \frac{in}{H\log (i+1)}+\frac{(k-i)n}{H\log (k-i+1)}.\]
Since the function $x/\log(x+1)$ is concave, the right-hand side is minimal for $i\in\{1,k-1\}$. 
We can also check that, for all $k\ge 2$,
\[\frac{1}{\log 2}+\frac{k-1}{\log k}-\frac{k}{\log (k+1)}>1,\]
and it follows that for all such decompositions $A=B\cup C$,
\[|B\ast [n]|+|C\ast [n]|> n\left(\frac{k}{H\log (k+1)}+\frac{1}{H}\right).\]
Then Corollary \ref{large_prime_split} tells us that
\[|A\ast [n]|\ge \frac{kn}{H\log (k+1)},\]
provided that the largest prime $p$ dividing $a_{1}\cdots a_{k}$ satisfies
\[\frac{k^{2}}{2p}\le H^{-1}.\]
This inequality holds if and only if
\[p\ge \frac{H k^{2}}{2}.\]
So either
\[|A\ast [n]|\ge \frac{kn}{H\log (k+1)},\]
or $p<H k^{2}/2$, in which case
\[\log p\ll \log H+\log k.\]
Recall that $n\ge H^{1+\eps}k^{2+\eps}$. Without loss of generality, it can be assumed that $\varepsilon$ is sufficiently small, then we have
\[\frac{H k^{2}}{2}<n^{1-\eps/3},\]
so by Lemma \ref{smooth_bound_A_star_n}, there exists a constant $C_{\eps}>0$ independent of $k$ and $H$ such that
\[|A\ast [n]|\ge \min \left\{\frac{kn}{H\log (k+1)},\frac{C_{\eps}kn}{\log k+\log H}\right\}.\]

Now, for $k\ge 2$, let
\[H_{k}=\inf\left\{H>1/2:\sup_{n\ge H^{1+\eps}k^{2+\eps}}\max_{\substack{1\le i\le k-1\\ |A|=i}} \frac{in}{|A\ast [n]|\log (i+1)}\le H\right\},\]
or $H_{k}=\infty$ if the set is empty. From the above argument, we have, for all $k\ge 3$, the recursive formula
\[H_{k}\le  \max \left\{H_{k-1},\frac{\log k+\log H_{k-1}}{C_{\eps}\log (k+1)}\right\}.\]
This in particular tells us that $H_{k}$ is finite for all $k$. Let $k_{0}\ge 3$ be such that $\log (k_{0}+1)\ge 1/C_{\eps}$. Then for $k\ge k_{0}$,
\[\frac{\log k+\log H_{k}}{C_{\eps}\log (k+1)}\le \frac{1}{C_{\eps}}+\log H_{k}.\]
Because $\log$ grows very slowly, there exists $\tilde{H}$ such that $1/C_{\eps}+\log H< H$, for all $H\ge \tilde{H}$. Then for all $k\ge k_{0}$,
\[H_{k+1}\le \max\{H_{k},\tilde{H}\}.\]
Hence there is some $H$, depending only on $\eps$, such that $H_{k}\le H$ for all $k$. Now, for $k\ge k_{1}:=\max(k_0,H^{1+1/\eps})$, we have $H^{1+\eps}k^{2+\eps}\le k^{2+2\eps}$. So recalling the definition of $H_{k}$, for $k\ge k_{1}$ and $n\ge k^{2+2\eps}$, we have
\[|A\ast [n]|\ge \frac{kn}{H\log (k+1)}.\]
We also recall that $H_{k}$ is finite for all $n$, which deals with the cases $k<k_{1}$. The result is easily deduced.
\end{proof}

\section{Explicit lower bounds on $h(k)$}\label{hA_bound_section}
We aim to find explicit lower bounds for $h(k)$. When $k$ is large, the methods of the previous section suffice, but when $k$ is small, more ideas are required. Lemma \ref{ratio_split} below allows us to bound $h(A)$ when the ratio of the largest to smallest element of $A$ is sufficiently large, and Corollary \ref{large_min_bound} will allow us to bound $h(A)$ when the smallest element is large. Together, these lemmas leave only finitely many $A$, and these can in theory be checked with a computer.

For $0\le \alpha<\beta$, let
\[[n]_{\alpha,\beta}=\bZ\cap (n\alpha,n\beta],\]
and
\[h_{\alpha,\beta}(A)=\lim_{n\rightarrow\infty}\frac{|A\ast [n]_{\alpha,\beta}|}{n},\]
where the limit exists by a standard application of the inclusion-exclusion principle. Clearly,
\[h_{0,\beta}(A)=\beta h(A).\]
\begin{lemma}\label{shift_limit}
For any finite set $A\subseteq \bN$ and $0\le \alpha<\beta$, we have
\[h_{\alpha,\beta}(A)\ge (\beta-\alpha)h(A).\]
\begin{proof}
We have that $[n]_{0,\beta}$ is the disjoint union of $[n]_{\alpha,\beta}$ and $[n]_{0,\alpha}$ and hence, by Lemma \ref{ring},
\[A\ast [n]_{0,\beta}=(A\ast [n]_{\alpha,\beta})\Delta (A\ast [n]_{0,\alpha}).\]
It follows that
\begin{align*}h(A)\beta n +o(n)=|A\ast [n]_{0,\beta}|&\le |A\ast [n]_{\alpha,\beta}|+|A\ast [n]_{0,\alpha}|\\&= |A\ast [n]_{\alpha,\beta}|+h(A)\alpha n+o(n).\end{align*}
The result then follows easily.
\end{proof}
\end{lemma}

\begin{lemma}\label{ratio_split}
Let $k\in\bN$, $A=\{a_{1},\dots, a_{k}\}$, where $1\le a_{1}<\cdots<a_{k}$. Then for $1\le i<k$,
\begin{equation}\label{highratio_bound}h(A)\ge h(\{a_{1},\dots ,a_{i}\})+\left(1-3\frac{a_{i}}{a_{i+1}}\right)h(\{a_{i+1},\dots ,a_{k}\}).\end{equation}
Hence, if $r\ge 3$ and $a_{k}/a_{1}\ge r^{k-1}$, then
\[h(A)\ge \min_{1\le i\le k-1} \left(h(i)+(1-3/r)h(k-i)\right).\]
\begin{proof}
Let $B=\{a_{1},\dots,a_{i}\}$, $C=\{a_{i+1},\dots,a_{k}\}$ and $\alpha=a_{i}/a_{i+1}$. If $1\le j\le i$ and $i+1\le \ell\le k$ and $a_{j}m_{1}=a_{\ell}m_{2}$ for some $m_{1},m_{2}\in [n]$, then
\[n\ge m_{1}=\frac{a_{\ell}}{a_{j}}m_{2}\ge \frac{m_{2}}{\alpha},\]
so
\[m_{2}\le \alpha n.\]
Hence, $C\ast [n]_{\alpha,1}$ is disjoint from $B\ast [n]$. It follows that $A\ast [n]=(B\ast [n])\Delta (C\ast [n])$ contains the disjoint sets
\[(B\ast [n])\setminus (C\ast [n]_{0,\alpha})\]
and
\[(C\ast [n]_{\alpha,1})\setminus (C\ast [n]_{0,\alpha}).\]
By Lemma \ref{shift_limit}, $h_{\alpha,1}(C)\ge (1-\alpha)h(C)$, so these sets have cardinality at least $(h(B)-\alpha h(C))n+o(n)$ and $(1-2\alpha)h(C)n+o(n)$ respectively. Then
\[h(A)\ge h(B)+(1-3\alpha)h(C),\]
which gives the first part.

For the second part, simply note that if $a_{k}/a_{1}\ge r^{k-1}$, then $a_{i}/a_{i+1}\le 1/r$ for some $i$ and then use \eqref{highratio_bound}.
\end{proof}
\end{lemma}

\begin{lemma}\label{large_min_split}
Suppose that $\gcd(A)=1$ and $a_{1}\ge R\ge 2$. Then there exist sets $B,C$ with the following properties.
\begin{enumerate}
\item The sets $B$, $C$ are nonempty.
\item The set $A$ is the disjoint union $B\cup C$.
\item Let $m=\gcd(B)$. Then for all $c\in C$, we have
\[\frac{m}{(m,c)}\ge R^{1/(k-1)}.\]
\end{enumerate}
\begin{proof}
Assume for a contradiction that no such $B$, $C$ exist. We will inductively construct a sequence $B_{i}$, $C_{i}$ that will lead to a contradiction. Let $B_{0}=\{a\in A:a_{1}\mid a\}$ and $C_{0}=A\setminus B_{0}$. Then $B_{0}$ is nonempty because it contains $a_{1}$ and $C_{0}$ is nonempty because otherwise $\gcd(A)=a_{1}$. Let $m_{0}=\gcd(B_{0})=a_{1}$ and note that $m_{0}\ge R$.

Now suppose as an inductive hypothesis that we have defined $B_{i}$, $C_{i}$ nonempty such that $|B_{i}|\ge i+1$, $A$ is the disjoint union of $B_{i}$ and $C_{i}$ and $m_{i}=\gcd(B_{i})\ge R^{1-i/(k-1)}$.
By the assumption that the lemma is false, there must be some $c'\in C_{i}$ such that
\begin{equation}\label{mi_over_gcd_upper_bound}\frac{m_{i}}{(m_{i},c')}<R^{1/(k-1)}.\end{equation}
Define $C_{i+1}=C_{i}\setminus \{c'\}$ and $B_{i+1}\cup \{c'\}$. Then
\[m_{i+1}=\gcd(B_{i+1})=(m_{i},c')>m_{i}R^{-1/(k-1)}\ge R^{1-(i+1)/(k-1)}.\]
We also clearly have $|B_{i+1}|=|B_{i}|+1\ge i+2$ and $A$ is the disjoint union of $B_{i+1}$ and $C_{i+1}$. To complete the induction step, it just remains to show that $C_{i+1}$ is nonempty. If $C_{i+1}=\emptyset$ then $C_{i}=\{c'\}$ and we must have $(m_{i},c')=\gcd(A)=1$. So $m_{i}<R^{1/(k-1)}$ by \eqref{mi_over_gcd_upper_bound}. But the induction hypothesis tells us that $m_{i}\ge R^{1-i/(k-1)}$. Note that $i+1\le |B_{i}|=|A|-1=k-1$, so $i\le k-2$ and $m_{i}\ge R^{1/(k-1)}$, which is a contradiction. Hence $C_{i+1}$ is nonempty.

By induction, we have such a pair $B_{i}$, $C_{i}$ for every $i\ge 0$, but this is impossible because $C_{i}$ is strictly decreasing in size and so must eventually be empty.
\end{proof}
\end{lemma}
As a corollary, we will use this result to obtain a bound for $h(A)$ when $\min A$ is sufficiently large. Unfortunately, the exponent $1/(k-1)$ in Lemma \ref{large_min_split} means that this bound grows very fast, but we are still able to compute $h(3)$ and $h(4)$ exactly, and then we will bound $h(k)$ for larger $k$ by a different argument. The bounds we get for $h(5)$ will be much weaker than what should be true, but computing the exact value of $h(5)$ would seem to require an improvement in Lemma \ref{large_min_split}. Unfortunately, Lemma \ref{large_min_split} is sharp in general, as seen by considering some large $x$, primes $x+o(x)\ge p_{1}>p_{2}>\cdots>p_{k}\ge x$ and $a_{i}=\prod_{j\ne i}p_{j}$. In this case, $a_{i}/(a_{i},a_{j})=p_{j}\le x+o(x)\le a_{1}^{1/(k-1)}(1+o(1))$ for all $i\ne j$. It is, however, possible to improve the lemma for \emph{almost all} $A$, but we will not go into this as it is not necessary for the proof of Theorem \ref{pilz_large_n}.
\begin{cor}\label{large_min_bound}
Suppose that $k\ge 2$, $\gcd(A)=1$ and $\min A\ge 2$. Then there exist $1\le i\le k-1$ and $B,C$ disjoint such that, $|B|=i$, $|C|=k-i$, $A=B\cup C$ and
\[h(A)\ge \min_{1\le i\le k-1} \left(h(B)+h(C)-\frac{2i(k-i)}{(\min A)^{1/(k-1)}}\right).\]
\begin{proof}
This follows immediately from Lemma \ref{large_min_split} and Lemma \ref{partition_bound}.
\end{proof}
\end{cor}

The following lemma combines the results of the section so far into a form that can be easily implemented as an algorithm for computing $h(k)$. For a given $k$ and $H$, the lemma narrows down the possible exceptions to the inequality $h(A)\ge H$ to a finite list that can be checked.
\begin{lemma}\label{hk_algorithm}
Let $k\ge 2$. For $1\le i\le k/2$ suppose that $h^{\ast}(i)\in \bR^{+}$ is such that for all $1\le i\le k-1$ and $B,C\subseteq \bN$ with $|B|=i$, $|C|=k-i$, and $\gcd(B)=\gcd(C)=1$, we have  $h(B)+h(C)\ge h^{\ast}(i)$ unless $(B,C)$ belong to some exceptional set of pairs which we call $\mathcal{B}_{i}$.

Then for any $A\subseteq \bN$ with $|A|=k$, and any
\[1\le H<\min_{1\le i\le k-1} (h(i)+h(k-i)),\]
if $h(A)<H$ then $A$ must be of one of the following forms.
\begin{itemize}
    \item There are some $1\le i\le k/2$, $(B,C)\in \mathcal{B}_{i}$ and $x,y\in \bN$ coprime such that $A=(x\cdot B)\cup (y\cdot C)$ and
    \begin{equation}\label{hk_algorithm_first_case}x\le \frac{2 i(k-i)}{h(i)+h(k-i)-H}\max C,\quad y\le \frac{2 i(k-i)}{h(i)+h(k-i)-H}\max B.\end{equation}
    \item Writing $A=\{a_{1},\dots,a_{k}\}$ with $a_{1}<\cdots<a_{k}$, we have
    \begin{equation}\label{hk_algorithm_min_bound}a_{1}\le \max_{1\le i\le k-1}\left(\frac{2i(k-i)}{h^{\ast}(i)-H}\right)^{k-1}\end{equation}
    and for all $1\le i\le k-1$,
    \begin{equation}\label{hk_algorithm_ratio_bound}\frac{a_{i+1}}{a_{i}}<3\left(1-\frac{H-1}{h^{\ast}(i)-1}\right)^{-1}.\end{equation}
    Also, every $a\in A$ must be
    \[\max_{1\le i\le k-1}\left(\frac{2i(k-i)}{h^{\ast}(i)-H}\right)\]
    smooth.
\end{itemize}
\begin{proof}
Suppose that $1\le i\le k/2$ and $A=(x\cdot B)\cup (y\cdot C)$ for some $(B,C)\in \mathcal{B}_{i}$ and $x,y\in \bN$. Then $\gcd(A)=\gcd(B)=\gcd(C)=1$, so $(x,y)=1$. If $b\in B$ and $c\in C$ then $(bx,cy)\mid bc$, so
\[\frac{(bx,cy)}{\max\{bx,cy\}}\le \min\left\{\frac{c}{x},\frac{b}{y}\right\}.\]
Then by Lemma \ref{partition_bound},
\[H>h(A)\ge h(B)+h(C)-2 i(k-i)\min\left\{\frac{\max C}{x},\frac{\max B}{y}\right\},\]
which rearranges to \eqref{hk_algorithm_first_case}, so we are in the first case.

We may now suppose that whenever $A=(x\cdot B)\cup (y\cdot C)$ with $|B|=i$, $|C|=k-i$ and $\gcd(B)=\gcd(C)=1$, we must have $(B,C)\notin \mathcal{B}_{i}$, so $h(B)+h(C)\ge h^{\ast}(i)$. For $k/2<i\le k-1$, define $h^{\ast}(i)=h^{\ast}(k-i)$. Then by Lemma \ref{ratio_split}, it follows that for all $1\le i\le k-1$,
\[H>h(A)\ge 1+(h^{\ast}(i)-1)\left(1-3\frac{a_{i}}{a_{i+1}}\right),\]
and by Corollary \ref{large_min_bound}, it follows that for some $1\le i\le k-1$,
\[H>h(A)\ge h^{\ast}(i)-2\frac{i(k-i)}{a_{1}^{1/(k-1)}}.\]
Rearranging these, we get \eqref{hk_algorithm_ratio_bound} and \eqref{hk_algorithm_min_bound} respectively. Smoothness follows from Corollary \ref{large_prime_split} and so we must be in the second case.
\end{proof}
\end{lemma}

Lemma \ref{hk_algorithm} is only helpful in practice when $k\le 4$, but in these cases we can compute $h(k)$ exactly.
\begin{lemma}\label{h4_size}
We have $h(1)=h(2)=1$, $h(3)=4/3$ and $h(4)=5/3$.
\begin{proof}
Clearly $h(\{a\})=1$ for any $a\in \bN$. Also if $a<b$ and $(a,b)=1$ then $h(\{a,b\})=2-2/b$, so $h(2)=1$ with the minimum occurring only at $A=\{a,2a\}$, $a\in \bN$.

For $k=3$ we use Lemma \ref{hk_algorithm} with $\mathcal{B}_{1}=\{(\{1\},\{a,b\}):1\le a<b\le 5, \:(a,b)=1\}$. We may take $h^{\ast}(1)=1+2-2/5=13/5$. We also take $H=11/6$. In the first case of the lemma, $B=\{1\}$, $C=\{a,b\}$ with $b\le 5$, $(a,b)=1$ and $x\le 12\max C$, $y\le 12$. After discarding the cases where $x\cdot B$ and $y\cdot C$ are not disjoint, there are $12781$ cases to check. In the second case, there are $3421$ cases to check. This can easily be done with a computer, and we find that there are exactly $34$ cases where $h(A)<11/6$, all of which have $\max A\le 11$. For each of the $34$ exceptions, $h(A)\ge 4/3$, with equality when $A=\{1,2,3\}$, so $h(3)=4/3$.

For $k=4$, we use Lemma \ref{hk_algorithm} again. Let $\mathcal{B}_{1}$ consist of the pairs $(\{1\},C)$ with $C$ being one of the $34$ exceptional sets with $|C|=3$ and $h(C)<11/6$ and let $\mathcal{B}_{2}$ consist of the pairs $(\{b_{1},b_{2}\},\{c_{1},c_{2}\})$ with $1\le b_{1}<b_{2}$, $1\le c_{1}<c_{2}$, $(b_{1},b_{2})=(c_{1},c_{2})=1$ and either $b_{2}=2$, $c_{2}\le 19$ or $b_{2}=3$, $c_{2}\le 4$. With this choice of $\mathcal{B}_{1},\mathcal{B}_{2}$, we may take $h^{\ast}(1)=1+11/6=17/6$ and $h^{\ast}(2)=29/10$. Then we let $H=5/3$ and find that there are $1166230$ cases to check of the form $(x\cdot B)\cup (y\cdot C)$ and $772553$ cases of the second type.
\end{proof}
\end{lemma}

For larger $k$ we will use a different method. As in the proof of Theorem \ref{kn/logk_bound}, we will combine Corollary \ref{large_prime_split} with a lower bound that holds for smooth $A$. This lower bound will follow from a more general bound, which we state below. Given a subset $\mathcal{B}\subseteq \mathcal{P}$ of primes, we define
\[[n]_{\mathcal{B}}=\{m\in [n]:\forall p\in \mathcal{P},\: p\mid m\implies p\in \mathcal{B}\}.\]
Given $A\subseteq \bN$, we also let $\mathcal{B}_{A}$ be the set of all primes which divide $a$ for some $a\in A$.

\begin{lemma}\label{few_prime_factor_bound_improved}
Let $m\ge 1$ and $A\subseteq \bN$ be finite and let $\mathcal{B}$ be a finite set of primes such that $\mathcal{B}_{A}\subseteq \mathcal{B}$. Then
\begin{equation}\label{prime_prod_lower_bound_improved} h(A)\ge \sum_{r=1}^{\infty}\frac{|A\ast [r]_{\mathcal{B}}|}{r(r+1)}\prod_{p\in \mathcal{B}}\left(1-\frac{1}{p}\right).\end{equation}
\begin{proof}
Fix $m\ge 1$. For $r,n\in \bN$, let $S_{r}(n)$ be the set of all $x\in \bZ$ with $n/(r+1)<x\le n/r$ and $p\nmid x$ for all $p\in \mathcal{B}$ and let
\[S(n)=\bigcup_{r=1}^{m} S_{r}(n).\]
Clearly for any fixed $n$, these sets $S_{r}(n)$ for $r\in \{1,2,\dots, m\}$ are pairwise disjoint. Because $\mathcal{B}_{A}\subseteq \mathcal{B}$, we also have that for any $1\le i\le k$ and $x\in S(n)$, $(x,a_{i})=1$.

For each $1\le r\le m$ and $x\in S_{r}(n)$, let $T(x)=x\cdot [r]_{\mathcal{B}}$. We claim that for any $x\in S(n)$, $A\ast T(x)\subseteq A\ast [n]$ and that for any two $x,y\in S(n)$ with $x\ne y$, $A*T(x)$ and $A*T(y)$ are disjoint. It will then follow that
\begin{equation}\label{partition_AstarTx} |A\ast [n]|\ge \sum_{x\in S(n)} |A\ast T(x)|.
\end{equation}
To prove the claim, suppose that $x\in S_{r}(n)$, $u\in [r]_{\mathcal{B}}$, and $y\in [n]$ are such that $a_{i}ux=a_{j}y$ for some $a_{i},a_{j}\in A$. Then, since $(x,a_{j})=1$, we must have $x\mid y$, and so there is some $v\in \bN$ such that $y=vx$. Since $y\in [n]$ and $x>n/(r+1)$, we must have $v<r+1$ and so $v\in [r]$. From the fact that $v\mid a_{i}u$, we may deduce that every prime divisor of $v$ is in $\mathcal{B}$ and so $v\in [r]_{\mathcal{B}}$. This tells us that if $z\in A\ast T(x)$ then any representation of $z$ as $a_{i}y$, with $y\in [n]$ must satisfy $y\in T(x)$. The claim then follows, and hence we have \eqref{partition_AstarTx}.

Now, clearly $|A\ast T(x)|=|A\ast [r]_{\mathcal{B}}|$ for any $x\in S_{r}(n)$, and it is also not hard to show, via the inclusion-exclusion principle, that
\[\lim_{n\rightarrow \infty}\frac{S_{r}(n)}{n}=\left(\frac{1}{r}-\frac{1}{r+1}\right)\prod_{p\in \mathcal{B}}\left(1-\frac{1}{p}\right).\]
It then follows that
\[h(A)\ge \sum_{r=1}^{m}\frac{|A\ast [r]_{\mathcal{B}}|}{r(r+1)}\prod_{p\in \mathcal{B}}\left(1-\frac{1}{p}\right).\]
As $m\in \bN$ was arbitrary, \eqref{prime_prod_lower_bound_improved} follows.
\end{proof}
\end{lemma}
As a special case, with $\mathcal{B}=\mathcal{B}_{A}$ and discarding all terms with $r\ge2$, we have
\begin{equation}\label{prime_prod_lower_bound} h(A)\ge \frac{k}{2}\prod_{p\mid a_{1}\cdots a_{k}}\left(1-\frac{1}{p}\right).\end{equation}
This can be strengthened to the following.

\begin{cor}\label{prime_prod_with_g}
Suppose that $\gcd(A)=1$ and let $q$ be the largest prime divisor of $a_{1}\cdots a_{k}$ and $q^{+}$ the smallest prime such that $q^{+}>q$. Then
\[h(A)\ge \left(\frac{k}{2}+2\sum_{r=2}^{q-1}\frac{g(r)}{r(r+1)}+\sum_{r=q}^{q^{+}-1}\frac{g(r)}{r(r+1)}\right)\prod_{p\le q}\left(1-\frac{1}{p}\right).\]
\begin{proof}
We apply Lemma \ref{few_prime_factor_bound_improved} with $\mathcal{B}$ the set of all primes which are at most $q$, and note that in this case, $[r]_{\mathcal{B}}=[r]$ for all $r\le q^{+}-1$. It suffices to prove that:
\begin{enumerate}
\item $|A\ast [1]|=|A|=k$,
\item $|A\ast [r]|\ge 2g(r)$ for $2\le r<q$ and
\item $|A\ast [r]|\ge g(r)$ for $q\le r<q^{+}$.
\end{enumerate}
These are all immediate except for (2). Suppose that $2\le r<q$. By Lemma \ref{rough_decomp}, we can decompose $A$ as a disjoint union
\[A=\bigcup_{i=1}^{m} b_{i}\cdot C_{i},\]
where $b_{1},\dots, b_{m}$ are distinct and $r$-rough, and each $C_{i}$ is a nonempty set of $r$-smooth numbers. Lemma \ref{rough_decomp} also tells us that
\[|A\ast [r]|\ge mg(r).\]
We have $q\mid a_{i}$ for some $i$, and $q>r$, so we must have $q\mid b_{j}$ for some $j$. The fact that $\gcd(A)=1$ means that we must also have some $b_{\ell}$ such that $q\nmid b_{\ell}$ and so $m\ge 2$. The result follows.
\end{proof}
\end{cor}

\begin{defn}
For $p$ a prime and $n\in \bN$, we let $\alpha_{p}(n)$ be the largest $\alpha\in \bZ$ such that $p^{\alpha}\le n$. Then we define $g^{\dag}(n)$ by
\[g^{\dag}(n)=\begin{cases}n&\text{if $1\le n\le 8$}\\ \sum_{p\le n}g^{\dag}\left(\lfloor n/p^{\alpha_{p}(n)}\rfloor\right) &\text{if $n\ge 9$}\end{cases}.\]
\end{defn}

\begin{lemma}\label{g_gdag_bound}
For all $n\in \bN$, $g(n)\ge g^{\dag}(n)$.
\begin{proof}
For $n\le 8$, this is precisely Theorem \ref{g_le8} and for $n\ge 9$, it follows immediately from \cite[Proposition 1]{ps2011}.
\end{proof}
\end{lemma}

\begin{lemma}\label{hk_algorithm2}
Let $p_{i}$ be the $i$th prime and for $k\in \bN$, let $h^{\dag}(k)$ be such that $h(k)\ge h^{\dag}(k)$. For $k,i\in \bN$, let
\[a_{k}(i)=\min_{1\le j\le k-1} \left(h^{\dag}(j)+h^{\dag}(k-j)-\frac{2 j(k-j)}{p_{i+1}}\right),\]
and
\[b_{k}(i)=\min_{1\le j\le i}\left(\frac{k}{2}+2\sum_{r=2}^{p_{j}-1}\frac{g^{\dag}(r)}{r(r+1)}+\sum_{r=p_{j}}^{p_{j+1}-1}\frac{g^{\dag}(r)}{r(r+1)}\right)\prod_{m\le j}\left(1-\frac{1}{p_{m}}\right).\]
Then for any $k,i\in \bN$,
\[h(k)\ge \min\{a_{k}(i),b_{k}(i)\}.\]
\begin{proof}
Let $A\subset \bN$ with $|A|=k$ and suppose without loss of generality that $\gcd(A)=1$. Let $p_{j}$ be the largest prime that divides some $a\in A$. If $j>i$ then $h(A)\ge a_{k}(i)$ by Corollary \ref{large_prime_split}. If $j\le i$ then it follows from Corollary \ref{prime_prod_with_g} and Lemma \ref{g_gdag_bound} that $h(A)\ge b_{k}(i)$.
\end{proof}
\end{lemma}
In the above lemma, the sequence $a_{k}(i)$ is increasing in $i$, and $b_{k}(i)$ is decreasing in $i$, so it is easy to compute the $i$ such that $\min\{a_{k}(i),b_{k}(i)\}$ is maximal. Using the results of Lemma \ref{h4_size} we, may thus iteratively compute a sequence $h^{\dag}(k)$ such that $h(k)\ge h^{\dag}(k)$ for all $k$. We tabulate some values of $h^{\dag}(k)$ below, truncated (i.e. rounded down) to $4$ decimal places, along with the value of $i$ that is used in the application of Lemma \ref{hk_algorithm2}.
\begin{table}[ht]
\caption{Lower bounds on $h(k)$}
\centering
\begin{tabular}{ |c|c|c|c|c|c|c| }
 \cline{1-3} \cline{5-7}
 $k$ & $h^{\dag}(k)$ & $i$ & \hspace{2cm} & $k$ & $h^{\dag}(k)$ & $i$\\ \cline{1-3} \cline{5-7}
5 & 1.1455\dots & 3 & \hspace{2cm} & 19 & 1.6386\dots & 41\\
6 & 1.2304\dots & 4 & \hspace{2cm} & 20 & 1.6674\dots & 45\\
7 & 1.2060\dots & 7 & \hspace{2cm} & 21 & 1.7133\dots & 47\\
8 & 1.2767\dots & 8 & \hspace{2cm} & 22 & 1.7366\dots & 52\\
9 & 1.3133\dots & 10 & \hspace{2cm} & 23 & 1.7745\dots & 55\\
10 & 1.3194\dots & 14 & \hspace{2cm} & 24 & 1.7997\dots & 60\\
11 & 1.3650\dots & 16 & \hspace{2cm} & 25 & 1.8333\dots & 64\\
12 & 1.3926\dots & 19 & \hspace{2cm} & 26 & 1.8725\dots & 67\\
13 & 1.4268\dots & 22 & \hspace{2cm} & 27 & 1.9008\dots & 72\\
14 & 1.4606\dots & 25 & \hspace{2cm} & 28 & 1.9329\dots & 76\\
15 & 1.4916\dots & 28 & \hspace{2cm} & 29 & 1.9626\dots & 81\\
16 & 1.5454\dots & 30 & \hspace{2cm} & 30 & 1.9913\dots & 86\\
17 & 1.5799\dots & 33 & \hspace{2cm} & 31 & 2.0232\dots & 90\\
18 & 1.6146\dots & 36 & \hspace{2cm} & 32 & 2.0566\dots & 94\\
 \cline{1-3} \cline{5-7}
\end{tabular}
\label{hk_table}
\end{table}
This gives the following lemma.
\begin{lemma}\label{hk_bound_med_k}
For $5\le k\le 32$, we have
\[h(k)\ge h^{\dag}(k),\]
where $h^{\dag}(k)$ is as defined in the above table.
\end{lemma}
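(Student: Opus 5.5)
The plan is a computer-assisted induction on $k$. It combines Lemma \ref{hk_algorithm2} with the exact values from Lemma \ref{h4_size}, so that the bound for each $k$ uses only bounds for smaller sizes.

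First I set $h^{\dag}(1)=h^{\dag}(2)=1$, $h^{\dag}(3)=4/3$ and $h^{\dag}(4)=5/3$. These are valid lower bounds (indeed exact values) by Lemma \ref{h4_size}. I also precompute $g^{\dag}(r)$ for all $r$ up to the largest prime $p_{i+1}$ needed, namely $p_{95}$, using the recursive definition of $g^{\dag}$. By Lemma \ref{g_gdag_bound}, $g^{\dag}(r)$ may be used in place of $g(r)$ in Corollary \ref{prime_prod_with_g}, which is exactly how $b_k(i)$ is defined.

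Next, proceeding by induction for $k=5,6,\dots,32$, I suppose $h(j)\ge h^{\dag}(j)$ has been established for all $j<k$. The quantity $a_k(i)$ only involves $h^{\dag}(j)$ and $h^{\dag}(k-j)$ with $1\le j\le k-1$, so it is a legitimate lower-bound ingredient. Lemma \ref{hk_algorithm2} then gives $h(k)\ge\min\{a_k(i),b_k(i)\}$ for every $i$. I choose the $i$ listed in Table \ref{hk_table} and evaluate $a_k(i)$ and $b_k(i)$ in exact rational arithmetic, or in interval arithmetic with rounding toward $-\infty$. I then check that the minimum is at least the tabulated value $h^{\dag}(k)$, which is itself rounded down. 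This rounding-down matters: each $h^{\dag}(k)$ feeds into later $a_{k'}(i)$, and truncation only weakens the inputs, so the induction remains valid.

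The main obstacle is not conceptual but the reliability of the computation. I must make sure that the truncated values used recursively are exactly the tabulated ones, and that each $b_k(i)$ is computed with its internal minimum over $j\le i$. That minimum needs every partial Euler product $\prod_{m\le j}(1-1/p_m)$ and every partial sum of $g^{\dag}(r)/(r(r+1))$ up to $p_{j+1}-1$. Monotonicity (increasing $a_k$, decreasing $b_k$) is only used to locate a good $i$ and is not needed for correctness. For correctness, a single verified $i$ per $k$ suffices.
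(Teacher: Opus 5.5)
Your proposal is the paper's own argument: seed with the exact values from Lemma~\ref{h4_size}, then apply Lemma~\ref{hk_algorithm2} iteratively by computer for $k=5,\dots,32$, where one verified $i$ per $k$ suffices. One practical caution: under the convention $p_1=2$ of Lemma~\ref{hk_algorithm2}, the $i$ column of the table seems to be shifted by one (for $k=5$, $i=3$ gives $a_5(3)\le h^{\dag}(2)+h^{\dag}(3)-\frac{12}{7}=\frac{13}{21}$, while $i=4$ gives $\min\{a_5(4),b_5(4)\}=b_5(4)\approx 1.14553$), so using the tabulated $i$ literally would make your check fail; use $i+1$ instead, with $g^{\dag}$ computed up to $p_{96}-1$, or simply search over $i$ as your monotonicity remark allows.
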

Note that all of the values in the above table are at least $8/7=1.1428\dots$, as required for Theorem \ref{hA_uniform_lower_bound}.

\begin{lemma}\label{constant_bound_large_k}
For all $k\ge 33$, we have $h(k)\ge 2$.
\begin{proof}
We proceed by induction on $k$. Suppose that $k\ge 33$ and $h(i)\ge 2$ for all $33\le i\le k-1$. If $k\le 53$ then by Lemma \ref{hk_algorithm2} and a computer calculation, we can check that $h(k)\ge 2$ as required. Suppose then that $k\ge 54$ and suppose for a contradiction that $A\subseteq \bN$ such that $|A|=k$ and $h(A)<2$.

From \cite[Theorem 7]{rs1962}, if $x\ge 285$, then
\begin{equation}\label{RS_thm7}\prod_{p\le x}\left(1-\frac{1}{p}\right)\ge \frac{e^{-\gamma}}{\log x}\left(1-\frac{1}{2(\log x)^{2}}\right),\end{equation}
where $\gamma$ is the Euler-Mascheroni constant. 

Let $q$ be the largest prime which divides some $a\in A$ and $q_{1}:=\max\{q,285\}$. Then
\[e^{-\gamma}\left(1-\frac{1}{2(\log q_{1})^{2}}\right)>\frac{21}{38},\]
so
\[\prod_{p\mid a_{1}\cdots a_{k}} \left(1-\frac{1}{p}\right) \ge \prod_{p\le q_{1}}\left(1-\frac{1}{p}\right) \ge \frac{21}{38} \cdot \frac{1}{\log q_{1}}.\]
Using Lemmas \ref{hk_bound_med_k}, \ref{h4_size} and the induction hypothesis, we can check that $h(i)+h(k-i)\ge 3$ for all $1\le i\le k-1$. Therefore, by Corollary \ref{large_prime_split}, we must have
\[q\le 2\left\lfloor\frac{k^{2}}{4}\right\rfloor\le \frac{k^{2}}{2},\]
so
\[\log q_{1}\le 2\log k-\log 2.\]
Where we have used the fact that $k\ge 54$ and so $k^{2}/2>285$.

Now, by equation \eqref{prime_prod_lower_bound}, we have
\[h(A)> \frac{21}{76}\cdot \frac{k}{2\log k-\log 2}.\]
The function $x/(2\log x-\log 2)$ is increasing for all $x>e\sqrt{2}$, so it suffices to note that
\[\frac{21}{76}\cdot \frac{54}{2\log 54 -\log 2}>2.\]
\end{proof}
\end{lemma}

Theorem \ref{hA_uniform_lower_bound} now follows by combining Lemmas \ref{h4_size}, \ref{hk_bound_med_k} and \ref{constant_bound_large_k}.

\section{Verifying the conjecture for large $n$}\label{large_n_section}
The goal of this section is to prove Theorem \ref{pilz_large_n}. The main idea of the proof is the same slicing argument used in Section \ref{small_n_section}. However, in those proofs, we used some ad hoc arguments which do not obviously generalise to arbitrary $n$. In the general case we replace these ad hoc arguments with lower bounds for sets of the form $(A_{1}\ast ([n]\setminus T))\Delta\cdots \Delta (A_{r}\ast ([n]\setminus T)^{\ast r})$ where the $A_{i}$ are relatively small sets and $T\subset (n/2,n]$ is a set of primes. This will be the focus of Lemmas \ref{A_star_n_minus_T_bound} to \ref{star_power_2_intersection}. We begin with the simplest case: $r=1$.

Recall the notation $A_{p}^{(i)}=\{a\in A: \nu_{p}(a)=i\}$ from section \ref{small_n_section}, which we will use throughout this section.
\begin{lemma}\label{A_star_n_minus_T_bound}
Let $n\in \bN$, $0<\eps<1$ and $T\subseteq (n/2,n]$ a set of primes with $|T|\le n^{1-\eps}$. Also let $A\subseteq \bN$ such that $2\le k=|A|$ and $A\ne \{a,2a\}$ for all $a\in \bN$. Then whenever
\[n\ge \max\left\{k^{2}, 2\cdot \left(3^{80}-321\right), (40020\log k-20010\log 2)^{1/\eps},4480^{1/\eps}\right\},\]
we have
\[\left|A\ast \left([n]\setminus T\right)\right|\ge n.\]
\begin{proof}
Suppose first that $k\le 80$ and let $A\subset \bN$ with $|A|=k$, where without loss of generality, $\gcd(A)=1$. Given some $p\in T$, we may show that
\[A_{p}^{(0)}\ast ([n]\setminus T)\subseteq A\ast ([n]\setminus T).\]
Hence we may suppose in all that follows that no $a\in A$ is divisible by any $p\in T$.

Given $1\le i_{1}<\cdots <i_{t}\le k$, we claim that when $t\ge 2$,
\[(a_{i_{1}}\cdot ([n]\setminus T))\cap \cdots \cap (a_{i_{t}}\cdot ([n]\setminus T))=(a_{i_{1}}\cdot [n])\cap \cdots \cap (a_{i_{t}}\cdot [n]).\]
Indeed, if $a_{i}x=a_{j}y$ for some $i\ne j$, $x\in T$ and $y\in [n]$, then since $x\in T$ is prime, either $(x,y)=1$ or $x=y$. The second case is impossible because then $a_{i}=a_{j}$, so $(x,y)=1$ and then $x\mid a_{j}$. But we supposed that no $a\in A$ is divisible by any $p\in T$, so this is a contradiction.

We also have, for all $1\le i_{1}<\cdots <i_{t}\le k$,
\[\left|(a_{i_{1}}\cdot [n])\cap \cdots \cap (a_{i_{t}}\cdot [n])\right|=\left\lfloor \frac{a_{i_{1}}n}{[a_{i_{1}},\dots,a_{i_{t}}]}\right\rfloor,\]
and for all $1\le i\le k$,
\[|a_{i}\cdot ([n]\setminus T)|=n-|T|.\]
Then, by the inclusion-exclusion principle and \eqref{exact_formula_eq},
\[\left|A\ast \left([n]\setminus T\right)\right|>h(A)n-k|T|-\sum_{t=2}^{\lceil k/2\rceil}\binom{k}{2t-1}2^{2t-2}.\]
Now,
\begin{align*}\sum_{t=2}^{\lceil k/2\rceil}\binom{k}{2t-1}2^{2t-2}&=\frac{1}{4}\left(\sum_{t=0}^{k}\binom{k}{t}2^{t}-\sum_{t=0}^{k}\binom{k}{t}(-2)^{t}\right)-k\\&=\frac{3^{k}-(-1)^{k}}{4}-k.\end{align*}
Note also that $k|T|\le k n^{1-\eps}$, so we have
\[\left|A\ast \left([n]\setminus T\right)\right|> h(A)n-kn^{1-\eps}-\frac{3^{k}-(-1)^{k}-4k}{4}.\]
By Theorem \ref{hA_uniform_lower_bound}, $h(A)\ge 8/7$, so with the assumptions $k\le 80$, $n\ge 4480^{1/\eps}$ and $n\ge 2\cdot \left(3^{80}-321\right)$, we have the desired bound.

Now suppose that $k\ge 81$ and that the result holds for all smaller values of $k$. Let $A=\{a_{1},\dots,a_{k}\}$ for some $a_{1}<\cdots<a_{k}$ with $\gcd(A)=1$ and let $p$ be the largest prime divisor of $a_{1}\cdots a_{k}$. By Corollary \ref{large_prime_split}, there are $1\le m\le k-1$ and $B$ and $C$ such that $A=B\cup C$, $|B|=m$, $|C|=k-m$, and
\[\left| A\ast \left([n]\setminus T\right)\right|\ge \left| B\ast \left([n]\setminus T\right)\right|+\left| C\ast \left([n]\setminus T\right)\right|-2n\frac{m(k-m)}{p}.\]
If neither of $B$ or $C$ is of the form $\{a\}$ or $\{a,2a\}$ for some $a\in \bN$, then from the induction hypothesis, the desired bound follows provided $p\ge k^{2}/2$. If $B$ is of the form $\{a\}$ or $\{a,2a\}$, then $| B\ast ([n]\setminus T)|\ge n-2n^{1-\eps}$, so we have
\[\left| A\ast \left([n]\setminus T\right)\right|\ge 2n-2n^{1-\eps}-\frac{4n(k-2)}{p},\]
and from the assumption that $n\ge 4480^{1/\eps}$ we again get the desired bound when $p\ge k^{2}/2>8(k-2)$.

We may therefore suppose that $p<k^{2}/2\le n/2$. Then by \eqref{A_star_n_smooth_bound_explicit}, we have
\[\left| A\ast \left([n]\setminus T\right)\right|\ge \frac{kn}{20\log k-10\log 2}-kn^{1-\eps}.\]
Now, since we assumed that $n\ge (40020\log k-20010\log 2)^{1/\eps}$, we find that
\[\left| A\ast \left([n]\setminus T\right)\right|\ge \frac{200}{2001}\cdot \frac{kn}{2\log k-\log 2}.\]
The function $x/(2\log x-\log 2)$ is increasing for $x\ge \sqrt{2}e$, so it suffices to verify that
\[\frac{200}{2001}\cdot \frac{81}{2\log 81-\log 2}>1.\]
\end{proof}
\end{lemma}
We now move on to understanding sets of the form $A\ast S^{\ast u}$ for $u>1$. When $u$ is a power of $2$, this takes a relatively simple form.
\begin{lemma}\label{power_2_power}
Let $v\in \bZ_{\ge 0}$ and $u=2^{v}$ and suppose that $A,S\subseteq \bN$ are finite sets. Then there exists a unique $\mathcal{A}\subseteq \bN$ of $u$-free integers (that is, integers which are not divisible by $p^{u}$ for any prime $p$) and $B_{a}\subseteq \bN$ for each $a\in \mathcal{A}$ such that
\[A=\bigcup_{a\in \mathcal{A}}a\cdot \left\{b^{u}:b\in B_{a}\right\}.\]
Furthermore, $A\ast S^{\ast u}$ is given by the disjoint union
\begin{equation}\label{ast_even_power_formula}A\ast S^{\ast u}=\bigcup_{a\in \mathcal{A}} a\cdot \left\{b^{u}:b\in B_{a}\ast S\right\},\end{equation}
so that in particular,
\[\left|A\ast S^{\ast u}\right|=\sum_{a\in \mathcal{A}}\left|B_{a}\ast S\right|.\]
\begin{proof}
Every integer has a unique decomposition as a product of a $u$-free part and a $u$th power. Letting $\mathcal{A}$ be the set of all $u$-free parts of the elements of $A$ gives us the desired decomposition. Before we prove \eqref{ast_even_power_formula}, we prove the special case
\[S^{\ast u}=\{x^{u}:x\in S\}\]
for any finite $S\subseteq \bN$. This will follow by induction on $v$ from the identity
\[S\ast S=\{x^{2}: x\in S\}.\]
The proof of this identity uses the same idea as the proof of \cite[Theorem 5.1]{km2025}. For each $z\in \bN$, consider the representations $z=xy$ with $x,y\in S$. Those representations with $x\ne y$ form pairs because $xy=yx$, so these have no effect on the parity of $r_{S,S}(z)$. Therefore, $r_{S,S}(z)$ is odd if and only if $z=x^{2}$ for some $x\in S$.

It follows that
\[A=\bigcup_{a\in \mathcal{A}} a\cdot B_{a}^{\ast u}\]
and
\[A\ast S^{\ast u}=\bigdelta_{a\in \mathcal{A}} a\cdot \left(B_{a}^{\ast u}\ast S^{\ast u}\right)=\bigdelta_{a\in \mathcal{A}} a\cdot\left\{b^{u}:b\in B_{a}\ast S\right\}.\]
The sets $a\cdot \{x^{u}:x\in B_{a}\ast S\}$ are disjoint, so this is in fact a disjoint union, which completes the proof.
\end{proof}
\end{lemma}

When $u$ is not a power of $2$, things are more difficult.
\begin{defn}For a natural number $u$, we let $\beta(u)$ denote the number of $1$s in the binary digit expansion of $u$.
\end{defn}
For $u\in \bN$, it can be shown that there is some $c_{u}>0$ such that $|A\ast ([n]\setminus T)^{\ast u}|\ge c_{u}n^{\beta(u)}$ for all $A\ne \emptyset$. When we take the symmetric difference with some other $A'\ast ([n]\setminus T)^{\ast v}$, say, we need to bound the size of the intersection. This can be done without too much difficulty when $n$ is large in terms of $\beta(u)$ and $\beta(v)$, but this is not quite good enough, because $\beta(u)$ and $\beta(v)$ may be arbitrarily large. We instead restrict to a special subset of $[n]$ which will allow us to more easily control the intersection with other sets.
\begin{lemma}\label{mult_diophantine_prime_solutions}
Suppose $n\in \bN$ and $\mathcal{Q}\subseteq (n/2,n]$ is a nonempty set of primes. Let $a,b,r,s\in \bN$ and $u_{1},\dots,u_{r},v_{1},\dots,v_{s}\in \bZ_{\ge 0}$ be fixed and suppose that $a\ne b$, $r\ge s$, $0\le u_{1}<\cdots <u_{r}$ and $0\le v_{1}<\cdots <v_{s}$. Then the number of solutions $x_{1},\dots,x_{r}\in \mathcal{Q}$, $y_{1},\dots,y_{s}\in [n]$ to the equation
\begin{equation}\label{mult_diophantine}a x_{1}^{2^{u_{1}}} \cdots x_{r}^{2^{u_{r}}}=b y_{1}^{2^{v_{1}}}\cdots y_{s}^{2^{v_{s}}}\end{equation}
is at most
\[2|\mathcal{Q}|^{r-1}.\]
Furthermore, if we instead assume that $a=b$, then the only solutions to \eqref{mult_diophantine} are when $r=s$, $u_{i}=v_{i}$ and $x_{i}=y_{i}$ for all $1\le i\le r$.
\begin{proof}
We may suppose without loss of generality that $(a,b)=1$. We first prove the second part of the lemma, where $a=b=1$. Then \eqref{mult_diophantine} becomes
\[x_{1}^{2^{u_{1}}} \cdots x_{r}^{2^{u_{r}}}=y_{1}^{2^{v_{1}}}\cdots y_{s}^{2^{v_{s}}}.\]
Consider $p\in \mathcal{Q}$. Recalling that $\nu_{p}$ is the $p$-adic valuation of a number, we have that
\[\nu_{p}\left(y_{1}^{2^{v_{1}}}\cdots y_{s}^{2^{v_{s}}}\right)=\sum_{j=1}^{s}2^{v_{j}}\nu_{p}\left(y_{j}\right)=\sum_{i=1}^{r}2^{u_{i}}\nu_{p}(x_{i}).\]
Since $p>n/2$, we must have $\nu_{p}(x_{i}),\nu_{p}(y_{j})\in \{0,1\}$ for all $1\le i\le r$, $1\le j\le s$. Therefore, $\sum\limits_{i:\ x_i=p} 2^{u_i}=\sum\limits_{j:\ y_j=p} 2^{v_j}$, where the two sides are the binary representations of the same number. So $p$ appears in $\{x_1,\dots,x_r\}$ and $\{y_1,\dots,y_s\}$ with the same multiplicity, and the sets of the corresponding exponents also must coincide. Using the fact that $r\geq s$, we get that $r=s$, $u_i=v_i$, $x_i=y_i$ for all $i$, as required.

Now suppose that $a\ne b$. Observe that we may assume that each prime factor of $ab$ is from $\mathcal Q$. Indeed, let the largest divisor of the numbers $a,b,y_1,\dots,y_s$ containing prime factors only from $\mathcal Q$ be denoted by $\tilde{a},\tilde{b},\tilde{y_1},\dots,\tilde{y_s}$, respectively. Then  
$$\tilde{a} x_{1}^{2^{u_{1}}} \cdots x_{r}^{2^{u_{r}}}=\tilde{b} \tilde{y}_{1}^{2^{v_{1}}}\cdots \tilde{y}_{s}^{2^{v_{s}}}$$
must hold. This is an equation of the same type satisfying the extra constraint, except the case when $\tilde{a}=\tilde{b}=1$. However, $\tilde{a}=\tilde{b}=1$ would imply that $u_i=v_i,x_i=\tilde{y}_i=y_i$ for every $i$, contradicting the assumption that $a\ne b$. Also, a solution $x_1,\dots,x_r,\tilde{y}_1,\dots,\tilde{y}_s$ determines $y_1,\dots,y_s$, as well. Therefore, from now on, we may further assume that each prime divisor of $ab$ is from $\mathcal Q$.

For the sake of contradiction assume that the statement is false. Let us take a counterexample where $r+s$ is minimal, and among these $a+b+\sum\limits_{i=1}^r u_i+\sum\limits_{j=1}^s v_j$ is minimal. Note that $x_1,\dots,x_r$ determine $y_1,\dots,y_s$ according to the already applied argument using the base-2 representations of the $p$-adic valuations.

If $0<\min(u_1,v_1)=t$, then $a$ and $b$ must be perfect $2^t$-th powers, so we may replace them by their $2^t$-th roots and decrease every $u_i,v_j$ by $t$. Hence, $\min(u_1,v_1)=0$ can be assumed.

If $u_1=0<v_1$, then $abx_1$ must be a perfect square. This can happen only if the square-free part of $ab$ is a prime $p\in \mathcal Q$ and $x_1=p$, so $x_1$ is uniquely determined, giving at most $|\mathcal Q|^{r-1}$ solutions. 

If $v_1=0<u_1$, then $aby_1$ must be a perfect square, so either the square-free part of $ab$ is a prime $p\in \mathcal Q$ and $y_1=p$, or $ab$ is a perfect square and $y_1=1$. Both cases contradict the minimality of the counterexample with only $r+(s-1)$ variables (replacing $b$ by $bp$ in the case when $y_1=p$).

Finally, let $u_1=v_1=0$, then $abx_1y_1$ must be a perfect square. The square-free part of $ab$ can be 1, $p$ or $pp'$ with primes $p,p'\in \mathcal Q$. If it is $p$ or $pp'$, then $x_1\in \{p,p'\}$, and we are done. If it is 1, then $x_1=y_1$. This again contradicts the minimality of the counterexample, completing the proof.
\end{proof}
\end{lemma}
\begin{cor}\label{power_n_intersection}
Let $a,b,u,v,n\in \bN$, $\mathcal{Q}\subseteq (n/2,n]$ a set of primes and $S\subseteq [n]$ such that $\mathcal{Q}\subseteq S$. If $\beta(u)\ge \beta(v)$ and $(a,u)\ne (b,v)$ then
\[\mathcal{Q}^{\ast u} \subseteq S^{\ast u},\]
\[\#\left(\mathcal{Q}^{\ast u}\right)=|\mathcal{Q}|^{\beta(u)}\]
and
\[\#\left(\left(a\cdot \mathcal{Q}^{\ast u}\right)\cap \left(b\cdot S^{\ast v}\right)\right)\le 2|\mathcal{Q}|^{\beta(u)-1}.\]
\begin{proof}
Let $r=\beta(u)$, $s=\beta(v)$. There are unique $u_{1}<\cdots<u_{r}$ and $v_{1}<\cdots<v_{s}$ such that
\[u=\sum_{i=1}^{r} 2^{u_{i}},\quad v=\sum_{i=1}^{s} 2^{v_{i}}.\]
Then by Lemma \ref{power_2_power} with $A=\{1\}$, for any finite set $T\subseteq \bN$,
\[T^{\ast u}=\left\{x^{2^{u_{1}}}: x\in T\right\}\ast \cdots \ast \left\{x^{2^{u_{r}}}:x\in T\right\}\]
and a similar equality holds for $T^{\ast v}$.

The second part of Lemma \ref{mult_diophantine_prime_solutions} tells us that each $x_{1}^{2^{u_{1}}}\cdots x_{r}^{2^{u_{r}}}$ with $x_{1},\ldots ,x_{r}\in \mathcal{Q}$ occurs exactly once as a product $y_{1}^{2^{u_{1}}}\cdots y_{r}^{2^{u_{r}}}$ with $y_{1},\dots,y_{r}\in S$, so
\[\mathcal{Q}^{\ast u} \subseteq S^{\ast u}.\]
The second equality then follows easily. The final inequality follows from the first part of Lemma \ref{mult_diophantine_prime_solutions}.
\end{proof}
\end{cor}

\begin{lemma}\label{A_chain_lower_bound}
Suppose that $A_{1},\dots,A_{r}\subseteq \bN$, $n\in \bN$ and $0<\eps\le 1/2$ with $\sum_{i=1}^{r}|A_{i}|\le n^{\eps}+1$. Let $s$ be the maximum of $\beta(i)$ among those $i$ such that $A_{i}\ne \emptyset$ and suppose that $n^{\eps}\ge 40\log n$. Then for all $T\subseteq [n]$ such that $|T|\le n^{1-\eps}$, we have
\begin{equation}\label{A_chain_lower_bound_eq}\#\left(\left(A_{1}\ast ([n]\setminus T)\right)\Delta \cdots \Delta \left(A_{r}\ast ([n]\setminus T)^{\ast r}\right)\right)\ge \left(\frac{n}{5\log n}\right)^{s}.\end{equation}
\begin{proof}
Let $j$ be such that $A_{j}\ne \emptyset$ and $\beta(j)=s$ and let $\mathcal{Q}$ be the set of all primes in $(n/2,n]\setminus T$. Fix some $a\in A_{j}$. Then by Corollary \ref{power_n_intersection},
\[\mathcal{Q}^{\ast j} \subseteq ([n]\setminus T)^{\ast j},\]
\[|\mathcal{Q}^{\ast j}|=|\mathcal{Q}|^{s}\]
and for all $1\le i\le r$ and $b\in A_{i}$ with $(a,j)\ne (b,i)$,
\[\#\left(\left(a\cdot \mathcal{Q}^{\ast j}\right)\cap \left(b\cdot ([n]\setminus T)^{\ast i}\right)\right)\le 2|\mathcal{Q}|^{s-1}.\]
Hence,
\[\#\left(\left(A_{1}\ast ([n]\setminus T)\right)\Delta \cdots \Delta \left(A_{r}\ast ([n]\setminus T)^{\ast r}\right)\right)\ge |\mathcal{Q}|^{s}-2|\mathcal{Q}|^{s-1}\sum_{i=1}^{r} |A_{i}|.\]
Now by \cite[Corollary 3]{rs1962}, for $n\ge 41$,
\[|\mathcal{Q}|\ge \frac{3n}{10\log n}-n^{1-\eps},\]
so recalling that $\sum_{i=1}^{r}|A_{i}|\le n^{\eps}+1$, we have \eqref{A_chain_lower_bound_eq} as long as
\[\frac{3n}{10\log n}-n^{1-\eps}-2n^{\eps}-2\ge \frac{n}{5\log n},\]
and we may check that this holds when $n^{\eps}\ge 40 \log n$.
\end{proof}
\end{lemma}
Lemma \ref{A_chain_lower_bound} handles all cases except when $A_{i}\ne \emptyset$ only for powers of two. The following lemma will allow us to handle this case as well.
\begin{lemma}\label{star_power_2_intersection}
Let $a,b,n\in \bN$, $S\subseteq [n]$ and $i,j\in \bZ$ with $0\le i<j$. Then
\[\left|\left(a\cdot S^{\ast 2^{i}}\right)\cap\left(b\cdot S^{\ast 2^{j}}\right)\right|\le n^{2^{i-j}}.\]
\begin{proof}
By Lemma \ref{power_2_power}, the quantity to be bounded is at most the number of solutions to the equation
\[ax^{2^{i}}=by^{2^{j}}\]
with $x,y\in [n]$. We may suppose without loss of generality that $(a,b)=1$. Then the equation implies that both $a$ and $b$ are perfect $2^i$-th powers: $a=\hat{a}^{2^i}$ and $b=\hat{b}^{2^i}$. The equation yields that $\hat{a}\cdot x=\hat{b}\cdot y^{2^{j-i}}$.
Write $x=x_{1}x_{2}^{2^{j-i}}$ with $x_{1}$ $2^{j-i}$-free.
Since $x_{1}$ is uniquely determined by $\hat{a}$ and $\hat{b}$, there are at most
\[\left\lfloor \left(\frac{n}{x_{1}}\right)^{2^{i-j}}\right\rfloor\leq n^{2^{i-j}}\]
choices of $x_{2}$. This completes the proof.
\end{proof}
\end{lemma}
\begin{proof}[Proof of Theorem \ref{pilz_large_n}]
Let $\mathcal{P}$ be the set of primes $p$ in the range $n/2<p\le n$, and for $\mathcal{Q}\subseteq \mathcal{P}$, we let $M(\mathcal{Q})$ be the largest number such that for every $A\subseteq \bN$ nonempty and finite, we have
\[\left|A\ast ([n]\setminus \mathcal{Q})\right|\ge M(\mathcal{Q}).\]
We claim that when $n$ is sufficiently large and $|\mathcal{Q}|\le   n^{4/5}$ then for any $q\in \mathcal{Q}$,
\begin{equation}\label{add_q_induction_step}M(\mathcal{Q}\setminus\{q\})\ge \min\left\{M(\mathcal{Q})+ n^{1/5}+1,n+2-2|\mathcal{Q}|\right\},\end{equation}
from which it follows by induction, starting with $|\mathcal{Q}|=\lfloor  n^{4/5}\rfloor$ as the base case, that
\[|A\ast [n]|\ge M(\emptyset)\ge n,\]
when $n$ is sufficiently large.

Let $q\in \mathcal{Q}$, $\mathcal{Q}'=\mathcal{Q}\setminus\{q\}$ and let $A\subseteq \bN$ be finite and nonempty.
Without loss of generality, $\gcd(A)=1$, so that $A_{q}^{(0)}\ne \emptyset$. Then by Lemma \ref{star_slice_formula},
\begin{align}\label{large_n_sliceformula}\nonumber \left|A\ast \left([n]\setminus \mathcal{Q}'\right)\right|&=\left|A_{q}^{(\nu_{q}(A))}\right|+\sum_{i=1}^{\nu_{q}(A)}\left|\left(A_{q}^{(i)}\ast \left([n]\setminus \mathcal{Q}\right)\right)\Delta \left(q\cdot A_{q}^{(i-1)}\right)\right|\\ &+\left|A_{q}^{(0)}\ast \left([n]\setminus \mathcal{Q}\right)\right|.\end{align}
Define $\mathcal{A}_{i}$ for $1\le i\le \nu_{q}(A)+1$, to be such that
\begin{equation}\label{calAdef}q^{i}\cdot \mathcal{A}_{i}=\left(A_{q}^{(i)}\ast \left([n]\setminus \mathcal{Q}\right)\right)\Delta \left(q\cdot A_{q}^{(i-1)}\right).\end{equation}
Note that we may have $\nu_{q}(A)=0$, in which case we simply have $\mathcal{A}_{1}=A$.

By the definition of $M$,
\[\left|A_{q}^{(0)}\ast \left([n]\setminus \mathcal{Q}\right)\right|\ge M(\mathcal{Q}),\]
so
\[\left|A\ast \left([n]\setminus \mathcal{Q}'\right)\right|\ge M(\mathcal{Q})+\sum_{i=1}^{\nu_{q}(A)+1} |\mathcal{A}_{i}|.\]
Therefore, if $\sum_{i=1}^{\nu_{q}(A)+1} |\mathcal{A}_{i}|\ge n^{1/5}+1$ then we are done, so in what follows, we suppose that
\[\sum_{i=1}^{\nu_{q}(A)+1} |\mathcal{A}_{i}|<n^{1/5}+1.\]
Rearranging \eqref{calAdef}, for $1\le i \le \nu_{q}(A)$, we have
\[q\cdot A_{q}^{(i-1)}=\left(q^{i}\cdot \mathcal{A}_{i}\right)\Delta \left(A_{q}^{(i)}\ast \left([n]\setminus \mathcal{Q}\right)\right).\]
Applying this inductively, we find that
\[A_{q}^{(0)}=\mathcal{A}_{1} \Delta \left(\mathcal{A}_{2}\ast \left([n]\setminus \mathcal{Q}\right)\right) \Delta \cdots \Delta \left(\mathcal{A}_{\nu_{q}(A)+1}\ast \left([n]\setminus \mathcal{Q}\right)^{\ast \nu_{q}(A)}\right),\]
and then
\[A_{q}^{(0)}\ast \left([n]\setminus \mathcal{Q}\right)=\bigdelta_{i=1}^{\nu_{q}(A)+1} \mathcal{A}_{i}\ast \left([n]\setminus \mathcal{Q}\right)^{\ast i}.\]
Suppose that there is some $i$ such that $\mathcal{A}_{i}\ne \emptyset$ and $\beta(i)\ge 2$. Then if $n\ge 6561434682105162$ then $n^{1/5}\ge 40\log n$ and so by Lemma \ref{A_chain_lower_bound},
\[\left|A_{q}^{(0)}\ast \left([n]\setminus \mathcal{Q}\right)\right|\ge \frac{n^{2}}{25(\log n)^{2}}> n,\]
which implies \eqref{add_q_induction_step}. So we may suppose that $\mathcal{A}_{i}\ne \emptyset$ only if $i$ is a power of $2$. Let $u_{1}<\dots<u_{r}$ be those $u_{i}$ such that $\mathcal{A}_{u_{i}}\ne \emptyset$ and
\[\mathcal{B}_{i}=\mathcal{A}_{u_{i}}.\]

When $r=1$, let $\mathcal{B}=\mathcal{B}_{1}$ and $u=u_{1}$. For all $0\le i\le \nu_{q}(A)$, we have 
\[A_{q}^{(i)}=\mathcal{B}\ast \left([n]\setminus \mathcal{Q}\right)^{\ast (\nu_{q}(A)-i)}.\]
It follows that $\gcd(\mathcal{B})=\gcd(A)=1$. From \eqref{large_n_sliceformula}, we have
\[\left|A\ast \left([n]\setminus\mathcal{Q}'\right)\right|=|\mathcal{B}|+\left|\mathcal{B}\ast \left([n]\setminus \mathcal{Q}\right)^{\ast u}\right|.\]
Recalling that $u$ is a power of $2$, applying Lemma \ref{power_2_power}, there is some nonempty $\mathcal{B}'\subseteq \mathcal{B}$ such that $|\mathcal{B}\ast \left([n]\setminus \mathcal{Q}\right)^{\ast u}|\ge |\mathcal{B}'\ast \left([n]\setminus \mathcal{Q}\right)|$, so we may suppose without loss of generality that $u=1$. We apply  Lemma \ref{A_star_n_minus_T_bound}, with $\eps=1/5$. We have $|\mathcal{B}|\le n^{1/5}+1$ and we can check that for $n\ge 2\cdot (3^{80}-321)$ this implies that $n\ge (8004\log n)^{5}> (40020\log |\mathcal{B}|-20010\log 2)^{5}$ and $n\ge 4480^{5}$. We therefore have
\[\left|\mathcal{B}\ast \left([n]\setminus \mathcal{Q}\right)\right|\ge n\]
unless $\mathcal{B}=\{1\}$ or $\{1,2\}$. If $\mathcal{B}=\{1\}$ then $|A\ast ([n]\setminus \mathcal{Q}')|= n+1-|\mathcal{Q}|=n-|\mathcal{Q}'|$. If $\mathcal{B}=\{1,2\}$ then $\mathcal{B}\ast ([n]\setminus \mathcal{Q})=([n]\setminus \mathcal{Q})\Delta (2\cdot ([n]\setminus \mathcal{Q}))$. It is easily seen that this set has size at least $n-2|\mathcal{Q}|$ and then $|A\ast ([n]\setminus \mathcal{Q}')|\ge n+2-2|\mathcal{Q}|=n-2|\mathcal{Q}'|$.

Suppose now that $r\ge 2$. We have
\[A_{q}^{(0)}\ast \left([n]\setminus \mathcal{Q}\right)=\bigdelta_{i=1}^{r} \mathcal{B}_{i}\ast \left([n]\setminus \mathcal{Q}\right)^{\ast u_{i}}.\]
Then by Lemma~\ref{A_star_n_minus_T_bound}  and Lemma~\ref{power_2_power}, for any $i$, we have
\[\left|\mathcal{B}_{i}\ast \left([n]\setminus \mathcal{Q}\right)^{\ast u_{i}}\right|\ge n-2|\mathcal{Q}|.\]
By Lemma \ref{star_power_2_intersection}, letting $m=\sum_{i=1}^{r}|\mathcal{B}_{i}|$, we have
\begin{align*}\left|\bigdelta_{i=1}^{r} \mathcal{B}_{i}\ast \left([n]\setminus \mathcal{Q}\right)^{\ast u_{i}}\right|&\ge r\left(n-2|\mathcal{Q}|\right)-2\sum_{1\le i<j\le r}|\mathcal{B}_{i}|\cdot|\mathcal{B}_{j}|n^{2^{i-j}}\\ &\ge r\left(n-2|\mathcal{Q}|\right)-2 m^{2}n^{1/2}.\end{align*}
The worst case is when $r=2$. Recalling that $m<n^{1/5}+1\le 2n^{1/5}$ and $|\mathcal{Q}|\le n^{4/5}$, we get a lower bound of $n(2-4n^{-1/5}-8n^{-1/10})$,  which implies \eqref{add_q_induction_step} when $n\ge 12^{10}$. So in all cases, \eqref{add_q_induction_step} holds, which completes the proof of the theorem.
\end{proof}

\section{Acknowledgements}
Both authors were supported by the National Research, Development and Innovation Office NKFIH (Excellence program, Grant Nr. 153829). PPP was also supported by the National Research, Development and Innovation Office NKFIH (Grant Nr. K146387) and by the Institute for Basic Science (IBS-R029-C4).

\end{document}